\begin{document}
\newtheorem{example}{Example}[section]
\newtheorem{theorem}{Theorem}[section]
\newtheorem{proposition}[theorem]{Proposition}
\newtheorem{corollary}[theorem]{Corollary}
\newtheorem{definition}{Definition}[section]
\newtheorem{lemma}[theorem]{Lemma}
\newtheorem{remark}{Remark}[section]
\newtheorem{question}{Question}[section]
\newtheorem{conjecture}{Conjecture}[section]
\newtheorem*{ac}{Acknowledgements}



\title{\textbf{M\"{o}bius Disjointness for C$^*$ Algebras}\\
} 

\author{\textsc{Jinsong Wu and Wei Yuan} \\
{}} %

\date{}

\maketitle



\begin{abstract}
In this paper we introduce the M\"{o}bius disjointness for C$^*$-algebras with their automorphisms and studied the M\"{o}bius disjointness for finite dimensional C$^*$ algebras, finite von Neumann algebras, reduced free group algebra and canonical anticommutation relation algebra etc.
\end{abstract}

\hspace*{3,6mm}\textit{Keywords:} M\"{o}bius function, Singular spectrum, Bogoliubov automorphism, Irrational rotation.
\vspace{30pt} 

{\small} {\small \textbf{2010 MR Subject Classification 46L55} }

\section{Introduction}
Let $\mu$ be the M\"{o}bius function, that is $\mu(n)$ is $0$ if $n$ is not square-free, and is $(-1)^t$ if $n$ is a product of $t$ distinct primes. We say $\mu$ is linearly disjoint from a flow $(X,T)$, where $X$ is compact topological space and $T:X\to X$ is a continuous map, if
\begin{equation}\label{disjoint}
\frac{1}{N}\sum_{n\leq N}\mu(n)f(T^nx)\to 0, \quad \mbox{as }N\to\infty
\end{equation}
for any $f$ in $C(X)$ and $x$ in $X$. The M\"{o}bius disjointness conjecture posted by Peter Sarnak (which is also known as Sarnak's conjecture) states that $\mu$ is linearly disjoint from every flow $(X,T)$ when the topological entropy of $(X,T)$ is zero. Recently, B. Green and T.Tao \cite{GreenTao12} showed that the M\"{o}bius function is strongly asymptotically orthogonal to any polynomial nilsequences; Liu and Sarnak \cite{LiuSar13} showed that the M\"{o}bius function is linearly disjoint from an analytic skew product on the 2-torus with additional conditions on the Fourier coefficients.

In the view of operator algebra, the algebra $C(X)$ of all continuous function on a compact Hausdorff space is a C$^*$ algebra, any point $x$ gives a pure state $\rho_x$ on $C(X)$, and the continuous map $T:X\to X$ will induce an endomorphism $\alpha_T$ on $C(X)$. Then the condition (\ref{disjoint}) can be rephrased as
\begin{equation}\label{disjoint1}
\frac{1}{N}\sum_{n\leq N}\mu(n)\rho_x(\alpha_T^n(f))\to 0, \quad \mbox{as }N\to\infty
\end{equation}
for any $f$ in $C(X)$ and $x$ in $X$. Hence it is reasonable to consider the
M\"{o}bius disjointness for a C$^*$ algebra with its endomorphism.

Let $\mathfrak{A}$ be a C$^*$-algebra and $\alpha$ its endomorphism, The pair $(\mathfrak{A},\alpha)$ is said to be a noncommutative flow. When $\mathfrak{A}$ is a von Neumann algebra, we need its endomorphism $\alpha$ to be weak-operator continuous. We then can define M\"{o}bius disjointness for $(\mathfrak{A},\alpha)$. The M\"{o}bius function $\mu$ is linearly disjoint from $(\mathfrak{A},\alpha)$ if
\begin{equation}\label{disjoint2}
\frac{1}{N}\sum_{n\leq N}\mu(n)\rho(\alpha^n(A))\to 0, \quad \mbox{as }N\to\infty
\end{equation}
for any state $\rho$ on $\mathfrak{A}$ and $A$ in $\mathfrak{A}$.

The M\"{o}bius disjointness conjecture also concerns the topological entropy for $(X,T)$. There are several way to define an entropy for a C$^*$-algebra with an automorphism (or endomorphism). In \cite{CS75}, A. Connes and E. St{\o}ruer introduced the entropy $h_\tau(\alpha)$ of automorphism $\alpha$ of II$_1$ von Neumann algebra $\mathcal{M}$ with a tracial state $\tau$. Later, A. Connes, H. Narnhofer and W. Thirring \cite{CNT} defined a dynamic entropy $h_\varphi(\alpha)$ for any C$^*$ algebra $\mathfrak{A}$ (or von Neumann algebra ) with its automorphism $\alpha$ with respect to a $\alpha$-invariant state $\varphi$ of $\mathfrak{A}$. This entropy for C$^*$ algebras is called CNT entropy. In 1995 D. Voiculescu \cite{V95} introduced topological entropy $ht(\alpha)$ for a nuclear C$^*$ algebra $\mathfrak{A}$ with its automorphism $\alpha$. N. Brown \cite{Brown} showed that the entropy given by Voiculescu is good for exact C$^*$ algebra and Dykema \cite{Dyk} shows that $h_\varphi(\alpha)\leq ht(\alpha)$ for an exact C$^*$ algebra $\mathfrak{A}$ with its automorphism $\alpha$, where $\varphi$ is an $\alpha$-invariant state on $\mathfrak{A}$. The entropy is called Voiculescu-Brown entropy. We will show that  the M\"{o}bius function is linearly disjoint from any factor of type II$_1$ with its automorphism when the state $\rho$ in the equation (\ref{disjoint2}) is normal, but its CNT entropy might be greater than zero. Hence we will pick Voiculescu-Brown entropy for exact C$^*$-algebras. As N. Brown pointed out in \cite{Brown}, the Voiculescu-Brown entropy is also well-defined for endomorphisms of exact C$^*$ algebras.

With M\"{o}bius disjointness and Voiculescu-Brown entropy for a noncommutative flow $(\mathfrak{A},\alpha)$, we formulate M\"{o}bius disjointness conjecture for exact C$^*$ algebras.

\begin{conjecture}\label{Con1}
The M\"{o}bius function $\mu$ is linearly disjoint from a noncommutative flow $(\mathfrak{A},\alpha)$ when the Voiculescu-Brown entropy of $(\mathfrak{A},\alpha)$ is zero, where $\mathfrak{A}$ is a unital exact C$^*$-algebra and $\alpha$ is an endomorphism of $\mathfrak{A}$.
\end{conjecture}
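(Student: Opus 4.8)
The plan is to reduce the noncommutative statement to the classical Sarnak conjecture by passing to the induced dynamics on the state space, and then to isolate exactly which part is genuinely out of reach. First I would fix a state $\rho$ and an element $A\in\mathfrak{A}$ and observe that the scalar sequence $a_n=\rho(\alpha^n(A))$ is bounded by $\|A\|$, since $\alpha$ is a $*$-endomorphism and hence contractive. The crucial point is that $a_n$ is itself of the form $f(T^n x)$ for a \emph{commutative} flow: the adjoint map $\alpha^*\colon S(\mathfrak{A})\to S(\mathfrak{A})$, $\phi\mapsto\phi\circ\alpha$, is weak-$*$ continuous on the compact state space, the evaluation $\widehat{A}\colon\phi\mapsto\phi(A)$ is weak-$*$ continuous, and $a_n=\widehat{A}\big((\alpha^*)^n\rho\big)$. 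Thus, writing $X$ for the forward orbit closure $\overline{\{(\alpha^*)^n\rho:n\ge 0\}}$ (on which $\alpha^*$ restricts to a continuous self-map), the noncommutative average in (\ref{disjoint2}) is \emph{exactly} the classical average (\ref{disjoint}) for the commutative flow $(X,\alpha^*)$ at the point $\rho$ with the function $\widehat{A}$.

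Granting this reduction, the program has two further steps. The second is an entropy comparison: I would restrict to the separable, forward-$\alpha$-invariant subalgebra $\mathfrak{B}=C^*(\{\alpha^n(A):n\ge0\},1)$, on which $ht(\alpha|_{\mathfrak{B}})\le ht(\alpha)=0$ by monotonicity of the Voiculescu--Brown entropy under invariant subalgebras, and then argue that $h_{\mathrm{top}}(X,\alpha^*)\le ht(\alpha|_{\mathfrak{B}})$. Concretely, the Bowen--Dinaburg $(\epsilon,n)$-separated sets for $\alpha^*$, measured through the functions $\{\widehat{B}:B\in\mathfrak{B}\}$ that separate points of $S(\mathfrak{B})$, should be controlled by the completely positive approximation ranks defining $ht$; this recovers Voiculescu's identity $ht(\alpha_T)=h_{\mathrm{top}}(T)$ in the commutative case and is the natural vehicle for transferring the zero-entropy hypothesis to $(X,\alpha^*)$. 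The third step is then to invoke the classical Sarnak conjecture for the zero-entropy flow $(X,\alpha^*)$ at the point $\rho$.

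The invariant-state case shows why the difficulty is concentrated in the third step rather than in the entropy. If $\rho$ is $\alpha$-invariant, the GNS construction furnishes an isometry $U$ with $U\pi_\rho(A)\xi_\rho=\pi_\rho(\alpha(A))\xi_\rho$, so $a_n=\langle U^n\eta,\xi_\rho\rangle$ with $\eta=\pi_\rho(A)\xi_\rho$; writing $U=\int_{\mathbb{T}}z\,dE(z)$ (handling the pure-shift part of the Wold decomposition by square-summability) gives $a_n=\int_{\mathbb{T}}z^n\,d\sigma(z)$ for a finite complex measure $\sigma$, whence
\[
\frac{1}{N}\sum_{n\le N}\mu(n)a_n=\int_{\mathbb{T}}\Big(\frac{1}{N}\sum_{n\le N}\mu(n)z^n\Big)\,d\sigma(z)\longrightarrow 0
\]
by Davenport's uniform bound $\frac{1}{N}\sum_{n\le N}\mu(n)z^n\to0$ for $z\in\mathbb{T}$ together with bounded convergence. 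This argument is unconditional in the entropy and works for any maximal spectral type, singular or not; it is precisely the point evaluations (pure, non-invariant states $\rho_x$) that escape it, and these are exactly the states at which Sarnak's conjecture lives.

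The main obstacle is therefore structural rather than technical: the reduction above shows that Conjecture \ref{Con1} \emph{contains} the classical Sarnak conjecture (take $\mathfrak{A}=C(X)$, which is nuclear hence exact, $\alpha=\alpha_T$, and $\rho=\rho_x$ a point evaluation, using $ht(\alpha_T)=h_{\mathrm{top}}(T)$), and the state-space reduction shows it is essentially equivalent to it. A full proof is thus beyond current methods, and the realistic content is twofold: establishing the entropy-comparison lemma $h_{\mathrm{top}}(X,\alpha^*)\le ht(\alpha)$ in a usable form, and verifying Conjecture \ref{Con1} for the specific classes treated in this paper (finite-dimensional algebras, finite von Neumann algebras with normal states, the reduced free group C$^*$-algebra, and the CAR algebra), where the extra structure---discrete or singular spectrum of the implementing unitary, Bogoliubov or rotation dynamics---makes the analogue of the third step tractable.
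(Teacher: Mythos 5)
The statement you were asked to prove is labelled as a conjecture, and the paper indeed never proves it; your recognition of this, together with your reduction to the classical Sarnak conjecture via the induced weak-$*$ dynamics $\rho\mapsto\rho\circ\alpha$ on the state space and the Kerr/Kerr--Li entropy comparison $h_{\mathrm{top}}(f_\alpha)=0$ when $ht(\alpha)=0$, is exactly the content of the paper's own Proposition in Section 2 (attributed to Hanfeng Li), so your approach matches the paper's. The one point you gloss over is the case of a (non-surjective) injective endomorphism: the paper first passes to the inductive limit $\mathfrak{B}=\varinjlim(\mathfrak{A},\alpha)$, where $\alpha$ becomes an automorphism with $ht=0$ by Brown's Proposition 2.14, before invoking the entropy comparison, whereas your orbit-closure argument tacitly assumes that comparison applies to endomorphisms directly.
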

It is clear that the M\"{o}bius disjointness conjecture for exact C$^*$ algebra implies the M\"{o}bius disjointness conjecture. Due to Hanfeng Li, we see that The M\"{o}bius disjointness conjecture implies the M\"{o}bius disjointness conjecture for exact C$^*$ algebras when the endomorphism is injective.

In this paper, we will show that the M\"{o}bius disjointness conjecture for exact C$^*$ algebra is true when
\begin{itemize}
\item[(1)] $(\mathfrak{A},\alpha)$, $\mathfrak{A}$ is finite-dimensional C$^*$ algebra and $\alpha$ is any automorphism of $\mathfrak{A}$; $\mathfrak{A}$ is the unitalization of algebra of all compact operators on a Hilbert space with its automorphism whose entropy is zero;
\item[(2)] $(CAR(\mathcal{H}),\alpha_U)$, $CAR(\mathcal{H})$ is the canonical anti-commutation relation (briefly, CAR) algebra with respect to a Hilbert space $\mathcal{H}$ and $\alpha$ is the Bogoliubov automorphism induced by a unitary operator whose spectral measure is pure point measure on $\mathcal{H}$;
\item[(3)] $(C_r^*(F_\mathbb{Z}),\alpha)$, $C_r^*(F_\mathbb{Z})$ is the reduced C$^*$ algebra of free group on $\mathbb{Z}$ generators and $\alpha$ is the shift on generators.
\end{itemize}

The rest of paper will contain four sections. In Section 2 we will introduce some basic properties for M\"{o}bius disjointness for C$^*$ algebras and show that the M\"{o}bius disjointness conjecture for exact C$^*$ algebra is equivalent to the classical one when the endomorphism is injective.
In Section 3 the M\"{o}bius disjointness for finite-dimensional C$^*$ algebras and unitalization of algebra of compact operators is studied.
In Section 4 we studied the weakly linear disjointness for finite von Neumann algebras.
In Section 5
we will studied the M\"{o}bius disjointness for CAR algebras with Bogoliubov automorphisms
In Section 6 we show that M\"{o}bius function is linearly disjoint from  reduced free group C$^*$ algebra with its shift on generators.

\begin{ac}
The authors would like to thank Liming Ge for suggestion of considering Sarnak's conjecture in operator algebras. The first author also would like to thank E.H.El Abdalaoui for useful suggestions. We thanks Hanfeng Li for pointing out that the M\"{o}bius conjecture for exact C$^*$ algebra is equivalent to the classical one.
\end{ac}

\section{Prelimilaries}
We begin by recalling the definition of Voiculescu-Brown entropy.

Let $\mathfrak{A}$ be an exact (equivalently , nuclearly embeddable) C$^*$ algebra and $\alpha$ an automorphism of $\mathfrak{A}$. Let $\pi:\mathfrak{A}$ be a faithful *-representation. For a finite set $\Omega\subset\mathfrak{A}$ and $\delta>0$ we denote by $CPA(\pi,\Omega,\delta)$ the collection of triples $\varphi,\psi,\mathfrak{B}$ where $\mathfrak{B}$ is a finite dimensional C$^*$ algebra and $\varphi:\mathfrak{A}\to\mathfrak{B}$ and $\psi:\mathfrak{B}\to\mathcal{B}(\mathcal{H})$ are contractive completely positive maps such that $\|(\psi\circ\varphi)(a)-\pi(a)\|<\delta$ for all $a\in\Omega$. This collection is nonempty by nuclear embeddability. We define $rcp(\Omega,\delta)$ to be the infimum of $rank\mathfrak{B}$ over all $(\varphi,\psi,\mathfrak{B})\in CPA(\pi,\Omega,\delta)$ with rank referring to the dimension of a maximal abelian C$^*$ subalgebra. We then set
\begin{eqnarray*}
ht(\alpha,\Omega,\delta)&=&\limsup_{n\to\infty}\frac{1}{n}\log rcp(\Omega\cup \alpha\Omega\cup\cdots\cup\alpha^{n-1},\delta)\\
ht(\alpha,\Omega)&=&\sup_{\delta>0}ht(\alpha,\Omega,\delta),\\
ht(\alpha)&=&\sup_\Omega ht(\alpha,\Omega)
\end{eqnarray*}
where the last supremum is taken over all finite sets $\Omega\subset\mathfrak{A}$. The quantity $ht(\alpha)$ is a C$^*$-dynamical invariant which is the Voiculescu-Brown entropy of $\alpha$.

For the definition of Connes-Narnhofer-Thirring entropy (briefly CNT entropy) for C$^*$ algebras, we skip it here and refer to \cite{CNT}. Let $\mathfrak{A}$ be a unital nuclear C$^*$ algebra and $\alpha$ its automorphism. Brown \cite{Brown} showed that $ht(\alpha)=h_\varphi(\alpha)$, where $\varphi$ is $\alpha$-invariant state on $\mathfrak{A}$ and $h_\varphi(\alpha)$ is the CNT entropy of $\alpha$ with respect to $\varphi$. In general $ht(\alpha)\geq h_\varphi(\alpha)$.

Next, we will introduce the M\"{o}bius disjointness for a C$^*$ algebra $\mathfrak{A}$ with an endomorphism $\alpha$. We begin with the definition of the M\"{o}bius function $\mu$.

The M\"{o}bius function $\mu(n)$, $n = 1, 2, 3, \ldots$ is defined by

\begin{align*}
   \mu(n) = \left\{
  \begin{array}{l l}
    1 & \quad \text{if n=1 }\\
    0 & \quad \text{if n is not square free}\\
    (-1)^t & \quad \text{if n is a product of t distinct primes}
  \end{array} \right.
\end{align*}

Throughout the paper, the function $e(x)$ will denote $e^{2\pi i x}$ for real $x$.

\begin{lemma}\label{mu1}
Let $p$ be positive integer and $0\leq l<p$. For arbitrary $h>0$,
\begin{equation*}
\frac{1}{N}\sum_{n\leq N\atop n\equiv l(\mod p)}\mu(n)e(a_dn^d+a_{d-1}n^{d-1}+\cdots+a_1n+a_0)=O((\log N)^{-h})
\end{equation*}
where the implied constant may depend on $h$ and $p$, but is independent of any of coefficients $a_d,\cdots,a_0$. In particular,
\begin{equation*}
\frac{1}{N}\sum_{n\leq N}\mu(n)e(\theta n)=O((\log N)^{-h})
\end{equation*}
uniformly in $\theta$
\end{lemma}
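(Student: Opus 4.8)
The plan is to first remove the congruence condition $n\equiv l\,(\mathrm{mod}\,p)$ by expanding the indicator into additive characters, and then to appeal to the uniform orthogonality of $\mu$ against polynomial phases over a complete interval. Writing $P(n)=a_dn^d+\cdots+a_0$, I would use
$$\mathbf{1}_{n\equiv l\,(\mathrm{mod}\,p)} = \frac{1}{p}\sum_{j=0}^{p-1} e\!\left(\frac{j(n-l)}{p}\right),$$
so that
$$\sum_{\substack{n\leq N\\ n\equiv l\,(\mathrm{mod}\,p)}}\mu(n)\,e(P(n)) = \frac{1}{p}\sum_{j=0}^{p-1} e\!\left(-\frac{jl}{p}\right)\sum_{n\leq N}\mu(n)\,e\!\left(P(n)+\tfrac{j}{p}n\right).$$
The decisive feature is that the extra phase $jn/p$ only modifies the \emph{linear} coefficient, so each inner sum is again a complete M\"obius sum twisted by a polynomial of degree $d$ with the same leading coefficient; the congruence has cost only a fixed factor $p$, which is allowed to be absorbed into the implied constant.

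Next I would reduce everything to the complete-sum bound
$$\frac{1}{N}\sum_{n\leq N}\mu(n)\,e(Q(n)) = O\!\left((\log N)^{-h}\right)$$
valid for every polynomial $Q$ of degree at most $d$, with implied constant depending only on $h$ and $d$ and \emph{not} on the coefficients of $Q$. Summing the $p$ resulting estimates and using $|e(-jl/p)|=1$ then yields the lemma, the factor $1/p$ cancelling the number of terms. The ``In particular'' clause is exactly the specialization $p=1$, $d=1$, $Q(n)=\theta n$, which is Davenport's classical estimate $\sum_{n\leq N}\mu(n)e(\theta n)=O(N(\log N)^{-h})$ uniform in $\theta$.

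The heart of the matter, and the main obstacle, is the uniform complete-sum bound for polynomial phases, where the emphasis must be on uniformity in all the coefficients simultaneously. I would obtain it by realizing $e(Q(n))$ as $F(g(n)\Gamma)$ on a fixed nilmanifold $G/\Gamma$ depending only on $d$, where $F$ is a fixed Lipschitz function and $g:\mathbb{Z}\to G$ is a polynomial sequence whose coordinates encode $a_d,\dots,a_0$. The Green--Tao theorem that $\mu$ is strongly asymptotically orthogonal to polynomial nilsequences \cite{GreenTao12} then gives $\sum_{n\leq N}\mu(n)F(g(n)\Gamma)=O_{h,G/\Gamma,F}(N(\log N)^{-h})$, and crucially the implied constant does not depend on the polynomial sequence $g$, hence not on the coefficients of $Q$. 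This is the only deep input; everything else is elementary manipulation, and taking $A=h$ in their statement supplies the exponent we need.

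An alternative, more self-contained route would be Vaughan's identity to split $\mu$ into Type I and Type II sums, handling the Type I pieces by Weyl or van der Corput estimates for the polynomial phase and the Type II pieces by Cauchy--Schwarz followed by Weyl differencing. The delicate point in that approach, and the reason the nilsequence machinery is cleaner to invoke, is that extracting a genuine power-of-logarithm saving \emph{uniformly} in every coefficient $a_d,\dots,a_0$ (rather than merely an $o(1)$ saving, or a saving with constant depending on how well the $a_i$ are approximated by rationals) is precisely where the classical estimates become technical; the strong orthogonality statement packages exactly this uniformity.
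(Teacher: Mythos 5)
Your proof is correct, but it takes a genuinely different route from the paper. The paper offers no argument at all for this lemma: it simply cites Davenport \cite{Davenport37} for the linear estimate and Hua \cite{Hua} for the general polynomial case, and Hua's theorem is already stated for sums restricted to arithmetic progressions, so on the paper's reading no reduction of the congruence condition is even needed. You instead make that reduction explicit via orthogonality of additive characters mod $p$ (correctly observing that the twist $e(jn/p)$ perturbs only the linear coefficient, so the degree and the coefficient-uniformity are preserved, and the factor $p$ is harmless since the constant may depend on $p$), and then obtain the complete-sum bound from the Green--Tao strong orthogonality theorem \cite{GreenTao12}, realizing $e(Q(n))$ as a nilsequence on the abelian nilmanifold $\mathbb{R}/\mathbb{Z}$ equipped with the degree-$d$ filtration. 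That deduction is sound: the Green--Tao bound is uniform over polynomial sequences $g$, which is exactly the uniformity in $a_d,\dots,a_0$ that the lemma demands, and $e(\cdot)$ is Lipschitz on the circle (split into real and imaginary parts if the theorem is quoted for real-valued $F$); the resulting constant depends on $h$ and the degree $d$ through the filtration, consistent with how the paper later uses the lemma in its Proposition 3.1. What your route buys is a self-contained derivation with the coefficient-uniformity packaged cleanly by a single modern theorem; what it costs is invoking machinery far deeper than necessary for a result Hua established classically --- indeed, your sketched ``alternative route'' via Vaughan's identity, Weyl differencing, and Cauchy--Schwarz is essentially the content of the classical proofs of Davenport and Hua that the paper cites.
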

This second equation in the lemma is proved by Davenport\cite{Davenport37}. The general case is proved by Hua\cite{Hua}.

\begin{proposition}\label{ds1}
Let $f:\mathbb{N}\to\mathbb{C}$ with $|f(n)|\leq 1$ for all $n\in\mathbb{N}$ and $\epsilon>0$ small enough. Assume that for all primes $p_1,p_2\leq e^{1/\epsilon}$, $p_1\neq p_2$, we have that for $M$ large enough
$$|\sum_{m\leq M}f(p_1m)\overline{f(p_2m)}|\leq \epsilon M.$$
Then for $N$ large enough
$$|\sum_{n\leq N}\mu(n)f(n)|\leq 2\sqrt{\epsilon\log \frac{1}{\epsilon}} N.$$
\end{proposition}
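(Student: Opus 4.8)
The statement is the finitary form of the Daboussi--K\'atai (Bourgain--Sarnak--Ziegler) orthogonality criterion, and the plan is to prove it by the standard ``duplication'' argument combined with the multiplicativity of $\mu$. Write $P=e^{1/\epsilon}$ and let $\mathcal{P}$ be the set of primes $p\le P$; by Mertens' theorem $\sum_{p\in\mathcal{P}}1/p=\log\log P+O(1)=\log\frac1\epsilon+O(1)$. The first step is to discard the integers with no small prime factor: the number of $n\le N$ divisible by no prime in $\mathcal{P}$ is $\sim N\prod_{p\le P}(1-1/p)\sim e^{-\gamma}N/\log P=O(\epsilon N)$, so these $n$ contribute $O(\epsilon N)$ to $\sum_{n\le N}\mu(n)f(n)$, which is within the target bound since $\epsilon\le\sqrt{\epsilon\log(1/\epsilon)}$ for $\epsilon$ small.

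For the remaining $n$ I would introduce the weight $\omega(n)=\#\{p\in\mathcal{P}:p\mid n\}$. Since $\mu(pm)=-\mu(m)$ whenever $p\nmid m$, expanding over the prime factor gives
\begin{equation*}
\sum_{n\le N}\mu(n)f(n)\,\omega(n)=\sum_{p\in\mathcal{P}}\sum_{m\le N/p}\mu(pm)f(pm),
\end{equation*}
and the Tur\'an--Kubilius inequality shows that $\omega(n)$ concentrates around $D:=\sum_{p\in\mathcal{P}}1/p$, so the weighted sum is comparable to $D\sum_{n\le N}\mu(n)f(n)$ up to a controlled error. The heart of the matter is then to bound the weighted sum by Cauchy--Schwarz, separating $\mu$ from the $f$-part and squaring the inner sum so as to produce the bilinear correlations $\sum_m f(pm)\overline{f(qm)}$ to which the hypothesis applies.

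The decisive point in the off-diagonal estimate is to arrange the Cauchy--Schwarz so that each correlation sum runs over the \emph{common cofactor}, i.e.\ over $m\le N/(pq)$ for a pair of distinct primes $p,q$ both dividing $n$; the hypothesis then bounds it by $\epsilon\,N/(pq)$, and the crucial gain is that these weights sum to $\sum_{p\ne q}1/(pq)\le\big(\sum_{p\le P}1/p\big)^2=(\log\frac1\epsilon)^2$ rather than diverging. Played against the diagonal contribution $\sum_{p}1/p=\log\frac1\epsilon$, the two pieces combine, after normalizing by $D$ and extracting the square root inherent in Cauchy--Schwarz, to the stated $2\sqrt{\epsilon\log(1/\epsilon)}\,N$.

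The step I expect to be the main obstacle is precisely this off-diagonal bookkeeping. One must set up the Cauchy--Schwarz so that the relevant correlations genuinely appear over the product range $N/(pq)$, where the prime weights converge, rather than over $N/\max(p,q)$, where summing over the $\pi(P)^2$ pairs would be catastrophic for $P=e^{1/\epsilon}$; and one must simultaneously keep the Tur\'an--Kubilius concentration error and the exceptional-set error inside the $\sqrt{\epsilon\log(1/\epsilon)}$ budget while tracking the constant $2$. Managing this trade-off between the number of primes used (which must be large enough that $D\approx\log\frac1\epsilon$) and the cost of the correlations is the genuinely delicate part of the argument.
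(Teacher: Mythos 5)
A preliminary remark: the paper contains no proof of this proposition at all --- it is quoted verbatim as Theorem~2 of \cite{BSZ} --- so your sketch has to be measured against the Bourgain--Sarnak--Ziegler argument and against what is actually provable along the lines you set up.

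The step you yourself call decisive is resolved by assertion, and it cannot be resolved in the way you assert. In any Cauchy--Schwarz that makes the hypothesis usable, $f$ must sit inside the squared factor: one bounds $\bigl|\sum_{m}\mu(m)\sum_{p}f(pm)\bigr|^{2}$ by $N\sum_{m}\bigl|\sum_{p\in\mathcal{P},\,pm\le N}f(pm)\bigr|^{2}$, and expanding the square produces, for each pair $p\neq q$, the correlation $\sum_{m\le N/\max(p,q)}f(pm)\overline{f(qm)}$; the range $m\le N/\max(p,q)$ is forced because the cofactor $m$ is \emph{common} to the two arguments. The product range $N/(pq)$ arises only from the opposite pairing, in which one matches the two representations $n=p\cdot(n/p)=q\cdot(n/q)$ of a single integer $n$ divisible by $pq$; but then what appears is $\sum_{m\le N/(pq)}\mu(pm)\mu(qm)$ --- a correlation of the \emph{M\"obius} factors, about which the hypothesis says nothing, and which has no cancellation at all (for $(m,pq)=1$ one has $\mu(pm)\mu(qm)=\mu(m)^{2}\ge 0$, so this sum is of exact order $N/(pq)$). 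So you may have $f$-correlations with weights $1/\max(p,q)$, or $\mu$-correlations with weights $1/(pq)$, but never $f$-correlations with weights $1/(pq)$. Consequently the sum you must control really is $\sum_{p\neq q\le e^{1/\epsilon}}1/\max(p,q)$, which is of order $\epsilon^{2}e^{1/\epsilon}$ --- exactly the catastrophe you hoped to define away. The actual resolution (K\'atai's argument, and the quantitative form in \cite{BSZ}) is structural, not notational: each application of Cauchy--Schwarz is restricted to primes in a multiplicatively short block, inside which $\max(p,q)$ is comparable to every prime of the block and the pair sum is harmless, and one then sums over blocks.

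Independently of this, your architecture cannot reach the stated bound even if the off-diagonal terms were free. The Tur\'an--Kubilius error $\sum_{n\le N}|\omega(n)-D|$ is genuinely of order $N\sqrt{D}$ (the first absolute moment of $\omega-D$ is comparable to its standard deviation, so no improvement is possible within this method), and after dividing by $D=\log\frac{1}{\epsilon}+O(1)$ it contributes $\asymp N/\sqrt{\log(1/\epsilon)}$; the diagonal term $N^{2}D$ under your square root likewise returns $N/\sqrt{D}$ after normalization. But $N/\sqrt{\log(1/\epsilon)}$ exceeds the target $2\sqrt{\epsilon\log(1/\epsilon)}\,N$ by a factor $1/\bigl(2\sqrt{\epsilon}\,\log(1/\epsilon)\bigr)\to\infty$ as $\epsilon\to 0$, so your closing claim that ``the two pieces combine \dots to the stated bound'' is not correct arithmetic: even granting your off-diagonal accounting, the plan yields at best a bound of shape $O\bigl(N/\sqrt{\log(1/\epsilon)}\bigr)$, which is strictly weaker than the proposition for small $\epsilon$. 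Reaching the stated inequality requires avoiding the averaging loss altogether (the proof in \cite{BSZ} is not the straightforward finitization of the Daboussi--K\'atai argument you describe). A smaller gap in the same direction: replacing $\mu(pm)$ by $-\mu(m)$ costs $\sum_{p\in\mathcal{P}}N/p^{2}\approx 0.45\,N$ unless you impose a lower cutoff on the primes in $\mathcal{P}$, which you never do.
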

This is Theorem 2 in \cite{BSZ}.

Let $\mathfrak{A}$ be a C$^*$ algebra and $\alpha$ an automorphism of $\mathfrak{A}$. The pair $(\mathfrak{A},\alpha)$ is a noncommutative flow.
The M\"{o}bius function $\mu$ is linearly disjoint from $(\mathfrak{A},\alpha)$ if
$$\frac{1}{N}\sum_{n=1}^N\mu(n)\rho(\alpha^n(A))\to 0\quad \mbox{as }N\to\infty$$
for every $A$ in $\mathfrak{A}$ and any state $\rho$ on $\mathfrak{A}$.

Similarly, we can define M\"{o}bius disjointness for von Neumann algebras. Let $\mathcal{M}$ be a von Neumann algebra and $\alpha$ is an automorphism of $\mathcal{M}$, we say $\mu$ is weakly linearly disjoint from $(\mathcal{M},\alpha)$ if
$$\frac{1}{N}\sum_{n=1}^N\mu(n)\rho(\alpha^n(A))\to 0\quad \mbox{as }N\to\infty$$
for every $A$ in $\mathcal{M}$ and any normal state $\rho$ on $\mathcal{M}$.

We will first state that the M\"{o}bius disjointness conjecture for C$^*$ algebra is equivalent to the M\"{o}bius disjointness conjecture.

\begin{proposition}
The M\"{o}bius disjointness conjecture implies the M\"{o}bius disjointness conjecture for exact C$^*$ algebras when the endomorphism is injective.
\end{proposition}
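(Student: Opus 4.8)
The plan is to manufacture, out of the noncommutative data $(\mathfrak{A},\alpha,\rho,A)$, a genuine topological flow of zero entropy to which the classical conjecture applies. First I would cut down to a manageable algebra: replace $\mathfrak{A}$ by the $\alpha$-invariant C$^*$-subalgebra $\mathfrak{A}_0$ generated by the orbit $\{\alpha^n(A):n\ge 0\}$. This $\mathfrak{A}_0$ is separable and exact (exactness passes to subalgebras), $\alpha$ restricts to an endomorphism of it, and by monotonicity of the Voiculescu-Brown entropy under invariant subalgebras one has $ht(\alpha|_{\mathfrak{A}_0})\le ht(\alpha)=0$. This is exactly where injectivity enters: since $\alpha$ is injective I can dilate $(\mathfrak{A}_0,\alpha)$ to an automorphism $(\widetilde{\mathfrak{A}},\widetilde{\alpha})$ of the inductive limit $\widetilde{\mathfrak{A}}=\varinjlim(\mathfrak{A}_0\xrightarrow{\alpha}\mathfrak{A}_0\xrightarrow{\alpha}\cdots)$, with a \emph{faithful} embedding $\iota$ intertwining $\alpha$ and $\widetilde{\alpha}$ and with $ht(\widetilde{\alpha})=ht(\alpha|_{\mathfrak{A}_0})=0$; extending $\rho$ to a state $\widetilde{\rho}$ on $\widetilde{\mathfrak{A}}$ gives $\rho(\alpha^n(A))=\widetilde{\rho}(\widetilde{\alpha}^{\,n}(\iota(A)))$. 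Thus I may assume $\mathfrak{A}$ is separable, unital and exact, $\alpha$ is an automorphism, and $ht(\alpha)=0$.

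Next I would build the flow on the state space. Let $X=S(\mathfrak{A})$ be the state space with the weak-$*$ topology; since $\mathfrak{A}$ is separable, $X$ is compact and metrizable. As $\alpha$ is a unital automorphism, $T:=\alpha^{*}$, $T(\sigma)=\sigma\circ\alpha$, is a homeomorphism of $X$, and each $a\in\mathfrak{A}$ yields a weak-$*$ continuous function $\hat{a}\in C(X)$ by $\hat{a}(\sigma)=\sigma(a)$. Taking $f=\hat{A}$ and the point $x=\rho\in X$, one has $\rho(\alpha^n(A))=\hat{A}(T^n\rho)=f(T^nx)$, so the Ces\`{a}ro average in (\ref{disjoint2}) is literally the classical average (\ref{disjoint}) for the flow $(X,T)$, the function $f$ and the point $x$. (If $\alpha$ is only a unital endomorphism one replaces $S(\mathfrak{A})$ by the weak-$*$ compact quasi-state space, on which $\alpha^{*}$ is still continuous.)

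The heart of the matter is to show that $(X,T)$ has zero topological entropy. By the variational principle, $h_{\mathrm{top}}(X,T)=\sup_m h_m(T)$ over $T$-invariant Borel probability measures $m$ on $X$; each such $m$ has a barycenter $\varphi_m=\int_X\sigma\,dm(\sigma)$, which is an $\alpha$-invariant state on $\mathfrak{A}$. The goal is to dominate $h_m(T)$ by the CNT entropy $h_{\varphi_m}(\alpha)$, which is $\le ht(\alpha)=0$ by Dykema's inequality recalled in the introduction. In the commutative model $\mathfrak{A}=C(Y)$, where $T$ is the induced action on probability measures, this comparison is precisely the Glasner-Weiss phenomenon that zero entropy of $(Y,T)$ forces zero entropy of the induced system on measures; the main obstacle is to carry this through noncommutatively, i.e. to relate the classical measure entropy of $\alpha^{*}$ on $S(\mathfrak{A})$ to the CNT entropy of $\alpha$ at the barycenter. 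I expect this entropy comparison to be the only genuinely nontrivial step.

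Granting $h_{\mathrm{top}}(X,T)=0$, the classical M\"{o}bius disjointness conjecture applied to the zero-entropy flow $(X,T)$, the function $f=\hat{A}\in C(X)$ and the point $x=\rho$ gives $\frac{1}{N}\sum_{n\le N}\mu(n)f(T^nx)\to 0$, that is, $\frac{1}{N}\sum_{n\le N}\mu(n)\rho(\alpha^n(A))\to 0$. Since $A\in\mathfrak{A}$ and the state $\rho$ were arbitrary, $\mu$ is linearly disjoint from $(\mathfrak{A},\alpha)$, which is the assertion of the M\"{o}bius disjointness conjecture for exact C$^*$-algebras.
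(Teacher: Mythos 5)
Your overall skeleton is the same as the paper's: use injectivity to dilate $(\mathfrak{A},\alpha)$ to an automorphism of the inductive limit $\varinjlim(\mathfrak{A}\xrightarrow{\alpha}\mathfrak{A}\xrightarrow{\alpha}\cdots)$ with the same Voiculescu--Brown entropy (the paper cites \cite{Brown}, Proposition 2.14, for $ht(\beta)=ht(\alpha)$), then pass to the induced flow $f_\alpha(\sigma)=\sigma\circ\alpha$ on the weak-$*$ compact state space, and finally apply the classical conjecture to that flow. But at the decisive point --- that $ht(\alpha)=0$ forces $h_{\mathrm{top}}$ of the induced state-space flow to be zero --- you stop and say you ``expect this entropy comparison to be the only genuinely nontrivial step.'' That step is not something to be reproved from scratch: it is exactly the noncommutative Glasner--Weiss theorem established by Kerr \cite{Kerr04} and Kerr--Li \cite{KerrLi05} for exact C$^*$-algebras, and citing it is precisely how the paper closes the argument. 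As written, your proposal has a genuine gap at the one place where the proof has real content.

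Moreover, the specific strategy you propose for filling the gap cannot work. The inequality you aim for --- $h_m(T)\leq h_{\varphi_m}(\alpha)$, i.e.\ the Kolmogorov--Sinai entropy of a $T$-invariant measure $m$ on $\mathcal{S}(\mathfrak{A})$ dominated by the CNT entropy of its barycenter state --- is false in general. Take the commutative case $\mathfrak{A}=C(Y)$ with $(Y,T)$ the full two-shift, so $h_{\mathrm{top}}(Y,T)=\log 2$; the state space is the space of probability measures on $Y$ and the induced map is the push-forward. By Glasner--Weiss, positive entropy of $(Y,T)$ makes the topological entropy of the induced system \emph{infinite}, so the variational principle (applied upstairs) produces invariant measures $m$ on the state space with $h_m$ strictly larger than $\log 2$; yet the barycenter $\varphi_m$ is a $T$-invariant measure on $Y$, and its CNT entropy equals its Kolmogorov--Sinai entropy, which is at most $h_{\mathrm{top}}(Y,T)=\log 2$. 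So no barycenter-domination inequality exists; the entropy transfer is a dichotomy phenomenon (zero entropy downstairs gives zero upstairs, positive gives infinite), not a monotonicity statement, and it must be imported from \cite{Kerr04} or \cite{KerrLi05} rather than derived by the route you sketch.
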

\begin{proof}
Suppose that the M\"{o}bius disjointness conjecture is true. Let $(\mathfrak{A},\alpha)$ be a noncommutative flow, where $\mathfrak{A}$ is a unital C$^*$ algebra and $\alpha$ is an endomorphism of $\mathfrak{A}$. Suppose that the Voiculescu-Brown entropy $ht(\alpha)=0$. Denote by $f_\alpha$ the continuous map from the state space $\mathcal{S}(\mathfrak{A})$ of $\mathfrak{A}$ to itself induced by $\alpha$ given by $f_\alpha(\rho)=\rho\circ\alpha$.

First, we assume that $\alpha$ is an automorphism. By \cite{Kerr04} or \cite{KerrLi05}, we have that the topological entropy $h_{\text{top}}(f_\alpha)=0$. From the assumption that M\"{o}bius disjointness conjecture is true, we see that the display (\ref{disjoint2}) holds.

Next, we assume that $\alpha$ is injective endomorphism. Denote by $\mathfrak{B}$ the inductive limit of the system $\{\mathfrak{A}_n\}_{n\geq 1}$, where $\mathfrak{A}_n=\mathfrak{A}$ with *-isomorphism $\phi_{m,n}=\alpha^{n-m}:\mathfrak{A}_m\to\mathfrak{A}_n$ when $m\leq n$. Deonte by $\beta$ the induced automorphism of $\alpha$ on $\mathfrak{B}$. Then by \cite{Brown}, Proposition 2.14, we have that $ht(\beta)=0$. Hence the induced continuous map $f_\beta$ on $\mathcal{S}(\mathfrak{B})$ has topological entropy $h_{\text{top}}(f_\beta)=0$. We have the display (\ref{disjoint2}) holds for $\mathcal{S}(\mathfrak{B})$. Note that $\mathcal{S}(\mathfrak{B})$ is a projective limit of $\mathcal{S}(\mathfrak{A})$. We obtain that the display (\ref{disjoint2}) holds for $\mathcal{S}(\mathfrak{A})$.

\end{proof}

\begin{lemma}
Let $\mathcal{A}$ be a C$^*$ algebra and $\alpha\in Aut(\mathfrak{A})$. Suppose that $\alpha^q=Id$ for some $q$ in $\mathbb{N}$. Then $\mu$ is linearly disjoint from $(\mathcal{A},\alpha)$.
\end{lemma}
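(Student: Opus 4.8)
The plan is to exploit the fact that the hypothesis $\alpha^q=\mathrm{Id}$ makes the sequence $n\mapsto\alpha^n(A)$ periodic with period $q$. Indeed, writing $n=qk+l$ with $0\le l<q$ and $k\ge 0$, we have $\alpha^n=(\alpha^q)^k\alpha^l=\alpha^l$, so that $\rho(\alpha^n(A))$ depends only on the residue $l$ of $n$ modulo $q$. Setting $c_l:=\rho(\alpha^l(A))$ for $0\le l<q$, each $c_l$ is bounded: since $\alpha$ is an automorphism of a C$^*$-algebra it is isometric, and $\rho$ is a state, whence $|c_l|\le\|\alpha^l(A)\|=\|A\|$.

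First I would split the Ces\`{a}ro average over residue classes modulo $q$:
$$\frac{1}{N}\sum_{n=1}^N\mu(n)\rho(\alpha^n(A))=\sum_{l=0}^{q-1}c_l\left(\frac{1}{N}\sum_{\substack{n\le N\\ n\equiv l\ (\mathrm{mod}\ q)}}\mu(n)\right).$$
The problem thus reduces to controlling the mean value of $\mu$ along each arithmetic progression $n\equiv l\pmod q$. The key input is Lemma \ref{mu1}, applied with modulus $p=q$ and with all the polynomial coefficients $a_d,\dots,a_0$ set equal to zero, so that $e(a_dn^d+\cdots+a_0)\equiv 1$. For any fixed $h>0$ this yields, for each $l$,
$$\frac{1}{N}\sum_{\substack{n\le N\\ n\equiv l\ (\mathrm{mod}\ q)}}\mu(n)=O\big((\log N)^{-h}\big)\longrightarrow 0\quad\text{as }N\to\infty.$$

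Combining these, since there are only $q$ residue classes (a fixed finite number, independent of $N$) and each coefficient $c_l$ is bounded by $\|A\|$, the right-hand side of the splitting is a finite sum of terms each tending to $0$, and hence tends to $0$. As this holds for every $A\in\mathcal{A}$ and every state $\rho$, it is exactly the assertion that $\mu$ is linearly disjoint from $(\mathcal{A},\alpha)$.

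There is essentially no serious obstacle here: the entire content is the reduction to progressions together with the classical cancellation of $\mu$ on arithmetic progressions. The only point requiring a little care is that the number of progressions $q$ is held fixed while $N\to\infty$, so no loss is incurred in summing the $q$ error terms; were $q$ allowed to grow with $N$ the argument would break. One could alternatively bypass Lemma \ref{mu1} and invoke the prime number theorem in arithmetic progressions (equivalently $\sum_{n\le N,\ n\equiv l(q)}\mu(n)=o(N)$), but using Lemma \ref{mu1} keeps the argument self-contained and even supplies the quantitative rate $O((\log N)^{-h})$.
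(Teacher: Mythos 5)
Your proof is correct and is exactly the argument the paper intends: the paper's entire proof is the sentence ``Lemma \ref{mu1} can be applied here directly,'' and your splitting into residue classes modulo $q$ (using periodicity $\alpha^{n}=\alpha^{n\bmod q}$) with the polynomial in Lemma \ref{mu1} taken to be identically zero is precisely how that lemma applies. Your write-up simply makes explicit the reduction the paper leaves implicit, including the correct observation that $q$ is fixed so the finitely many $O((\log N)^{-h})$ error terms can be summed harmlessly.
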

\begin{proof}
Lemma \ref{mu1} can be applied here directly.
\end{proof}

Thus we will focus on aperiodic automorphisms of C$^*$ algebras.

\begin{lemma}\label{dense}
Let $\mathfrak{A}$ be a C$^*$ algebra and $\alpha\in Aut(\mathfrak{A})$. Suppose that $\Omega$ is a subset of $\mathfrak{A}$ such that $\mathfrak{A}$ is the norm closure of linear span of $\Omega$. If
\begin{equation*}
\frac{1}{N}\sum_{n=1}^N\mu(n)\rho(\alpha^n(A))\to 0\quad \mbox{as }N\to\infty
\end{equation*}
for any state $\rho$ on $\mathfrak{A}$ and $A$ in $\Omega$, then $\mu$ is linearly disjoint from $(\mathfrak{A},\alpha)$.
\end{lemma}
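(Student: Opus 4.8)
The plan is to run a standard density argument, exploiting the linearity of the Ces\`aro average in its algebra variable together with the fact that both states and $*$-automorphisms are norm-contractive. Write $S_N(A,\rho)=\frac{1}{N}\sum_{n=1}^N\mu(n)\rho(\alpha^n(A))$; the whole point is that the hypothesis controls $S_N$ on the generating set $\Omega$, and I want to upgrade this to all of $\mathfrak{A}$ by approximation, using a tail bound on $S_N$ that is uniform in $N$.

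First I would note that for each fixed state $\rho$ and each fixed $N$ the map $A\mapsto S_N(A,\rho)$ is linear, since $\mu(n)$ is a scalar, $\alpha^n$ is linear, and $\rho$ is linear. Consequently the hypothesis, which gives $S_N(A,\rho)\to 0$ for every $A\in\Omega$ and every state $\rho$, extends verbatim by linearity to $S_N(B,\rho)\to 0$ for every $B$ in the linear span of $\Omega$ and every state $\rho$.

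Next, fix an arbitrary $A\in\mathfrak{A}$, an arbitrary state $\rho$, and $\epsilon>0$. Since the linear span of $\Omega$ is norm-dense in $\mathfrak{A}$, I would choose $B$ in that span with $\|A-B\|<\epsilon$ and split $S_N(A,\rho)=S_N(B,\rho)+S_N(A-B,\rho)$. The key estimate bounds the second term uniformly in $N$: using $|\mu(n)|\leq 1$, the fact that every state satisfies $|\rho(x)|\leq\|x\|$, and the fact that a $*$-automorphism of a C$^*$ algebra is isometric (so $\|\alpha^n(A-B)\|=\|A-B\|$), one obtains
\[
|S_N(A-B,\rho)|\leq\frac{1}{N}\sum_{n=1}^N\|\alpha^n(A-B)\|=\|A-B\|<\epsilon
\]
for every $N$.

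Finally, combining the two pieces yields $\limsup_{N\to\infty}|S_N(A,\rho)|\leq\limsup_{N\to\infty}|S_N(B,\rho)|+\epsilon=\epsilon$, and letting $\epsilon\to 0$ gives $S_N(A,\rho)\to 0$; as $A$ and $\rho$ were arbitrary, $\mu$ is linearly disjoint from $(\mathfrak{A},\alpha)$. I do not expect a genuine obstacle here: the only point requiring care is that the error term is controlled uniformly in $N$, which is precisely what the isometry of $\alpha^n$ and the norm-one bound on states supply, so no appeal to convergence that is uniform over states is needed.
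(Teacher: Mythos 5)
Your proof is correct and follows essentially the same route as the paper's: approximate $A$ in norm by an element of the linear span of $\Omega$, bound the error term uniformly in $N$ via $|\mu(n)|\leq 1$, contractivity of states, and the isometry of $\alpha^n$, and apply the hypothesis to the span (the paper handles the span directly via the coefficients $\lambda_j$ rather than stating the linearity step separately, but this is only a difference in bookkeeping). No gaps; your $\limsup$ formulation even sidesteps the paper's harmless division by $\sum_j|\lambda_j|$.
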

\begin{proof}
For any $A$ in $\mathfrak{A}$, any $\epsilon>0$, there exist $\lambda_1,\ldots,\lambda_k\in\mathbb{C}$ and $A_1,\ldots,A_k\in\Omega$ such that $\|A-\sum_{j=1}^k\lambda_jA_j\|<\epsilon/2$. Since $A_1,\ldots, A_k$ satisfies the property list in the lemma, there exists a $N_0\in\mathbb{N}$ such that for any $N\geq N_0$, we have
$$\left|\frac{1}{N}\sum_{n=1}^N\mu(n)\rho(\alpha^n(A_j))\right|<\frac{1}{2\sum_{j=1}^k|\lambda_j|}\epsilon, \quad j=1,\ldots,k.$$
Then we have that
\begin{equation*}
\begin{split}
&\left|\frac{1}{N}\sum_{n=1}^N\mu(n)\rho(\alpha^n(A))\right|\\
\leq &\left|\frac{1}{N}\sum_{n=1}^N\mu(n)\rho(\alpha^n(A-\sum_{j=1}^k\lambda_jA_j))\right|+\left|\frac{1}{N}\sum_{n=1}^N\mu(n)\sum_{j=1}^k\lambda_j\rho(\alpha^n(A_j))\right|\\
\leq &\epsilon/2+\sum_{j=1}^k|\lambda_j|\left|\frac{1}{N}\sum_{n=1}^N\mu(n)\rho(\alpha^n(A_j))\right|\leq \epsilon/2+\epsilon/2=\epsilon.
\end{split}
\end{equation*}
This shows that $\mu$ is linearly disjoint from $(\mathfrak{A},\alpha)$.
\end{proof}

\begin{lemma}\label{sub}
Let $\mathfrak{A}$ be a C$^*$ algebra and $\alpha\in Aut(\mathfrak{A})$. Suppose that $\mathfrak{A}_0$ is a C$^*$-subalgebra of $\mathfrak{A}$ such that $\alpha(\mathfrak{A}_0)=\mathfrak{A}_0$. If $\mu$ is linearly disjoint from $(\mathfrak{A},\alpha)$, then $\mu$ is linearly disjoint from $(\mathfrak{A}_0,\alpha|_{\mathfrak{A}_0})$.
\end{lemma}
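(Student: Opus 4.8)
The plan is to reduce the statement for the subalgebra to the hypothesis for $\mathfrak{A}$ by extending states. First I would record that because $\alpha(\mathfrak{A}_0)=\mathfrak{A}_0$, the restriction $\alpha|_{\mathfrak{A}_0}$ is a genuine automorphism of $\mathfrak{A}_0$, and that $(\alpha|_{\mathfrak{A}_0})^n=\alpha^n|_{\mathfrak{A}_0}$; in particular $\alpha^n(A)\in\mathfrak{A}_0$ whenever $A\in\mathfrak{A}_0$. Thus the Ces\`aro average we must control, namely $\frac{1}{N}\sum_{n=1}^N\mu(n)\rho_0((\alpha|_{\mathfrak{A}_0})^n(A))$ for a state $\rho_0$ on $\mathfrak{A}_0$ and $A\in\mathfrak{A}_0$, involves only elements of $\mathfrak{A}_0$.

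Next I would invoke the state-extension theorem: every state $\rho_0$ on a C$^*$-subalgebra $\mathfrak{A}_0\subset\mathfrak{A}$ admits an extension to a state $\rho$ on $\mathfrak{A}$ (a positive linear functional on a C$^*$-subalgebra extends to a positive functional of the same norm by the Hahn--Banach theorem together with the order structure, and a state therefore extends to a state). Fixing such an extension $\rho$, for each $A\in\mathfrak{A}_0$ and each $n$ we have $\alpha^n(A)\in\mathfrak{A}_0$, so $\rho(\alpha^n(A))=\rho_0(\alpha^n(A))=\rho_0((\alpha|_{\mathfrak{A}_0})^n(A))$. Consequently the two averages agree termwise:
$$\frac{1}{N}\sum_{n=1}^N\mu(n)\rho_0((\alpha|_{\mathfrak{A}_0})^n(A))=\frac{1}{N}\sum_{n=1}^N\mu(n)\rho(\alpha^n(A)).$$

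Since $\mu$ is linearly disjoint from $(\mathfrak{A},\alpha)$ by hypothesis, the right-hand side tends to $0$ as $N\to\infty$ for every element of $\mathfrak{A}$ and every state on $\mathfrak{A}$; applying this to the chosen $\rho$ and to $A\in\mathfrak{A}_0\subseteq\mathfrak{A}$ forces the left-hand side to $0$ as well. As $\rho_0$ and $A$ were arbitrary, $\mu$ is linearly disjoint from $(\mathfrak{A}_0,\alpha|_{\mathfrak{A}_0})$.

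The only step that is not purely formal is the state-extension theorem, which I expect to be the main (and essentially sole) obstacle, though it is a classical fact requiring no new argument. One should take mild care in the non-unital case: if $\mathfrak{A}_0$ lacks a unit one still extends the norm-one positive functional $\rho_0$ to a norm-one positive functional $\rho$, which is again a state, so no genuine difficulty arises.
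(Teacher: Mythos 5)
Your proof is correct and follows exactly the paper's argument: extend the state $\rho_0$ on $\mathfrak{A}_0$ to a state on $\mathfrak{A}$ via Hahn--Banach (state extension), note that $\alpha$-invariance of $\mathfrak{A}_0$ makes the two Ces\`aro averages agree termwise, and apply the hypothesis. The paper states this in one line; your write-up simply supplies the routine details, including the harmless non-unital caveat.
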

\begin{proof}
By Hahn-Banach Theorem, any state $\rho$ of $\mathfrak{A}_0$ can be extended to a state $\rho'$ of $\mathfrak{A}$ such that $\rho'|_{\mathfrak{A}_0}=\rho$, we see the lemma holds.
\end{proof}

\begin{lemma}\label{cross}
Let $\mathfrak{A}$ be a C$^*$ algebra and $\alpha\in Aut(\mathfrak{A})$. Then $\mu$ is linearly disjoint from $(\mathfrak{A},\alpha)$ if and only if $\mu$ is linearly disjoint from $(\mathfrak{A}\rtimes_\alpha\mathbb{Z},Ad(U))$, where $U\in \mathfrak{A}\rtimes_\alpha\mathbb{Z}$ is the  unitary element implementing $\alpha$.
\end{lemma}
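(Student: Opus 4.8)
The plan is to prove the two implications separately; the reverse direction is immediate from the lemmas already established, while the forward direction carries the real content.

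For the ``if'' direction, suppose $\mu$ is linearly disjoint from $(\mathfrak{A}\rtimes_\alpha\mathbb{Z},\,Ad(U))$. Viewing $\mathfrak{A}$ as a C$^*$-subalgebra of $\mathfrak{A}\rtimes_\alpha\mathbb{Z}$, the defining relation $UaU^*=\alpha(a)$ shows that $Ad(U)(\mathfrak{A})=\mathfrak{A}$ and $Ad(U)|_{\mathfrak{A}}=\alpha$. Hence Lemma \ref{sub} applies directly and yields that $\mu$ is linearly disjoint from $(\mathfrak{A},\alpha)$.

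For the ``only if'' direction, assume $\mu$ is linearly disjoint from $(\mathfrak{A},\alpha)$. Since the algebraic crossed product is norm-dense in $\mathfrak{A}\rtimes_\alpha\mathbb{Z}$, the set $\Omega=\{aU^k : a\in\mathfrak{A},\, k\in\mathbb{Z}\}$ has dense linear span, so by Lemma \ref{dense} it suffices to verify the disjointness condition for each element $aU^k$ and each state $\rho$ on $\mathfrak{A}\rtimes_\alpha\mathbb{Z}$. The first step is the computation
$$(Ad(U))^n(aU^k)=U^n a U^k U^{-n}=(U^n a U^{-n})\,U^k=\alpha^n(a)U^k,$$
using that powers of $U$ commute. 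Thus I must control $\frac{1}{N}\sum_{n=1}^N\mu(n)\rho(\alpha^n(a)U^k)$.

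The key step, and the main obstacle, is that the functional $a\mapsto\rho(aU^k)$ is not a state on $\mathfrak{A}$, so the hypothesis cannot be applied to it directly. I would resolve this via the standard decomposition of a bounded functional on a C$^*$-algebra: setting $\rho_k(a):=\rho(aU^k)$, which is bounded with $\|\rho_k\|\leq\|\rho\|\,\|U^k\|=1$, one splits $\rho_k$ into its Hermitian parts and applies the Jordan decomposition to write $\rho_k=\sum_{j=1}^4 c_j\omega_j$ as a finite complex-linear combination of states $\omega_j$ on $\mathfrak{A}$ with bounded coefficients $c_j$. Then
$$\frac{1}{N}\sum_{n=1}^N\mu(n)\rho(\alpha^n(a)U^k)=\sum_{j=1}^4 c_j\,\frac{1}{N}\sum_{n=1}^N\mu(n)\omega_j(\alpha^n(a)),$$
and each inner average tends to $0$ as $N\to\infty$ by the assumed linear disjointness of $(\mathfrak{A},\alpha)$ applied to the state $\omega_j$ and the element $a$. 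As the outer sum is finite with fixed coefficients, the whole expression tends to $0$, which completes the verification on $\Omega$ and hence, via Lemma \ref{dense}, establishes disjointness from $(\mathfrak{A}\rtimes_\alpha\mathbb{Z},\,Ad(U))$.
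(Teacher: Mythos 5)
Your proof is correct and follows essentially the same route as the paper: Lemma \ref{sub} disposes of the ``if'' direction, and the ``only if'' direction uses the dense span $\{aU^k : a\in\mathfrak{A},\,k\in\mathbb{Z}\}$ together with Lemma \ref{dense} and the identity $U^n(aU^k)U^{*n}=\alpha^n(a)U^k$. The only difference is that you make explicit, via the Jordan decomposition of $\rho(\cdot\,U^k)$ into a complex combination of four states, why a bounded linear functional inherits the disjointness hypothesis --- a step the paper's proof leaves implicit when it simply notes that $\rho(\cdot\,U^k)$ is a bounded linear functional on $\mathfrak{A}$.
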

\begin{proof}
By Lemma \ref{sub}, we only have to show that if $\mu$ is linearly disjoint from $(\mathfrak{A},\alpha)$, then $\mu$ is linearly disjoint from $(\mathfrak{A}\rtimes_\alpha\mathbb{Z},Ad(U))$. So we assume that $\mu$ is linearly disjoint from $(\mathfrak{A},\alpha)$. Let $\Omega=\{AU^k|A\in\mathfrak{A}, k\in\mathbb{Z}\}$. Then the linear span of $\Omega$ is norm dense in $\mathfrak{A}$. By Lemma \ref{dense}, we only have to show
\begin{equation*}
\frac{1}{N}\sum_{n=1}^N\mu(n)\rho(U^n(AU^k)U^{*n})\to 0\quad \mbox{as }N\to\infty
\end{equation*}
Since $U^n(AU^k)U^{*n}=\alpha^n(A)U^k$ and $\rho(\cdot U^k)$ gives a bounded linear functional on $\mathfrak{A}$, we have the lemma proved.
\end{proof}

\begin{corollary}\label{inner}
If the conjecture \ref{Con1} is true for any $(\mathfrak{A},Ad(U))$, where $\mathfrak{A}$ is an exact C$^*$ algebra and $Ad(U)$ is an inner automorphism of $\mathfrak{A}$. Then the conjecture \ref{Con1} is true when automorphism is considered.
\end{corollary}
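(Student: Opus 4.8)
The plan is to realize the given automorphism as an inner automorphism on a larger exact algebra and then transfer the conclusion back via Lemma \ref{cross}. Concretely, suppose Conjecture \ref{Con1} holds for all flows $(\mathfrak{A},Ad(U))$ with $\mathfrak{A}$ exact and $Ad(U)$ inner, and let $(\mathfrak{A},\alpha)$ be given with $\mathfrak{A}$ a unital exact C$^*$ algebra, $\alpha$ an automorphism, and $ht(\alpha)=0$. I would pass to the crossed product $\mathfrak{B}=\mathfrak{A}\rtimes_\alpha\mathbb{Z}$ and the inner automorphism $Ad(U)$ implemented by the canonical unitary $U\in\mathfrak{B}$. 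By Lemma \ref{cross}, $\mu$ is linearly disjoint from $(\mathfrak{A},\alpha)$ if and only if it is linearly disjoint from $(\mathfrak{B},Ad(U))$, so it suffices to check that $(\mathfrak{B},Ad(U))$ satisfies the hypotheses of the corollary, namely that $\mathfrak{B}$ is exact and $ht(Ad(U))=0$.

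First I would verify exactness of $\mathfrak{B}$. Since $\mathbb{Z}$ is amenable, hence exact, and exactness is preserved under crossed products by exact groups, $\mathfrak{A}\rtimes_\alpha\mathbb{Z}$ is exact. Thus $(\mathfrak{B},Ad(U))$ is an exact C$^*$ algebra equipped with an inner automorphism, exactly the form required by the hypothesis.

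The crux is to show $ht_{\mathfrak{B}}(Ad(U))=0$. Because $\mathfrak{A}\subset\mathfrak{B}$ is an $Ad(U)$-invariant subalgebra with $Ad(U)|_{\mathfrak{A}}=\alpha$, monotonicity of Voiculescu-Brown entropy already gives $ht_{\mathfrak{B}}(Ad(U))\geq ht(\alpha)=0$, so the real task is the reverse inequality $ht_{\mathfrak{B}}(Ad(U))\leq ht(\alpha)$. I would fix a finite generating set $\Omega_0\subset\mathfrak{A}$ and work with $\Omega=\Omega_0\cup\{U,U^*\}$, which generates $\mathfrak{B}$. Since $Ad(U)$ fixes $U$ and restricts to $\alpha$ on $\mathfrak{A}$, one has $\bigcup_{k=0}^{n-1}Ad(U)^k(\Omega)=\{U,U^*\}\cup\bigcup_{k=0}^{n-1}\alpha^k(\Omega_0)$. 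I would then bound $rcp$ of this set in a faithful representation of $\mathfrak{B}$ by building finite-dimensional completely positive factorizations for the crossed product out of those for $\mathfrak{A}$, using the amenability of $\mathbb{Z}$ through Følner sets; the rank contribution from the fixed $\mathbb{Z}$-direction grows only subexponentially in $n$, so after applying $\tfrac1n\log$ and taking $\limsup$ this factor disappears, yielding $ht_{\mathfrak{B}}(Ad(U),\Omega)\leq ht(\alpha)$. Taking the supremum over finite sets gives $ht_{\mathfrak{B}}(Ad(U))\leq ht(\alpha)=0$.

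With exactness of $\mathfrak{B}$ and $ht(Ad(U))=0$ established, the assumed validity of Conjecture \ref{Con1} for inner automorphisms applies directly to $(\mathfrak{B},Ad(U))$, so $\mu$ is linearly disjoint from $(\mathfrak{B},Ad(U))$; Lemma \ref{cross} then transfers this back to $(\mathfrak{A},\alpha)$, which is exactly the desired conclusion. I expect the main obstacle to be the entropy estimate $ht_{\mathfrak{B}}(Ad(U))\leq ht(\alpha)$: one must construct the completely positive approximations for the crossed product with enough care to see that the rank contribution from the amenable $\mathbb{Z}$-direction is negligible on the exponential scale. If a direct computation of the entropy of $Ad(U)$ on a $\mathbb{Z}$-crossed product is available in the literature (for instance in \cite{Brown}), it may be cited in place of this Følner-set construction.
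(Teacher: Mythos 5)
Your proposal is correct and takes essentially the same route as the paper: pass to the crossed product $\mathfrak{A}\rtimes_\alpha\mathbb{Z}$, establish that $ht(Ad(U))=0$, apply the hypothesis to the inner flow, and transfer the conclusion back via Lemma \ref{cross}. The paper simply cites \cite{Brown} for the entropy equality $ht_{\mathfrak{A}\rtimes_\alpha\mathbb{Z}}(Ad(U))=ht_{\mathfrak{A}}(\alpha)$ rather than reproving it, exactly as you anticipated in your closing remark, so your F{\o}lner-set sketch (which would also need care since $\mathfrak{A}$ need not be finitely generated) can be replaced by that citation.
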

\begin{proof}
Let $\mathfrak{B}$ be an exact C$^*$ algebra and $\beta\in Aut(\mathfrak{B})$. Suppose that $ht_{\mathfrak{B}}(\beta)=0$. By \cite{Brown}, we have that $ht_{\mathfrak{B}\rtimes_\beta\mathbb{Z}}(Ad(U))=0$, where $U\in\mathfrak{B}\rtimes_\beta\mathbb{Z}$ is the unitary element which implements $\beta$. Then $\mu$ is linearly disjoint from $(\mathfrak{B}\rtimes_\beta\mathbb{Z},Ad(U))$. By Lemma \ref{cross}, we obtain $\mu$ is linearly disjoint from $(\mathfrak{B},\beta)$.
\end{proof}

\begin{lemma}\label{conjugate}
Let $\mathfrak{A}$ be a C$^*$ algebra and $\alpha,\beta\in Aut(\mathfrak{A})$. Then $\mu$ is linearly disjoint from $(\mathfrak{A},\alpha)$ if and only if $\mu$ is linearly disjoint from $(\mathfrak{A},\beta\alpha\beta^{-1})$.
\end{lemma}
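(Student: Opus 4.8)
The plan is to exploit the elementary identity $(\beta\alpha\beta^{-1})^n=\beta\alpha^n\beta^{-1}$, valid for every $n\geq 1$ because $\beta^{-1}\beta=\mathrm{Id}$, in order to transport the Ces\`aro--M\"obius averages for one flow to those for the other by a simultaneous change of state and of element. Since conjugation is a symmetric relation (writing $\gamma=\beta\alpha\beta^{-1}$ one has $\alpha=\beta^{-1}\gamma\beta$), it suffices to prove a single implication, and I would assume that $\mu$ is linearly disjoint from $(\mathfrak{A},\alpha)$ and deduce the same for $(\mathfrak{A},\beta\alpha\beta^{-1})$.

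I would fix an arbitrary state $\rho$ on $\mathfrak{A}$ and an arbitrary element $A\in\mathfrak{A}$. The first step is to check that $\rho':=\rho\circ\beta$ is again a state on $\mathfrak{A}$: as $\beta$ is a $*$-automorphism it is unital and positive, so $\rho'$ is positive with $\rho'(1)=\rho(\beta(1))=\rho(1)=1$, hence a state. Putting $B:=\beta^{-1}(A)\in\mathfrak{A}$, the identity above yields
$$\rho\big((\beta\alpha\beta^{-1})^n(A)\big)=\rho\big(\beta(\alpha^n(\beta^{-1}(A)))\big)=(\rho\circ\beta)\big(\alpha^n(B)\big)=\rho'\big(\alpha^n(B)\big).$$

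Summing against $\mu$ and averaging then gives
$$\frac{1}{N}\sum_{n=1}^N\mu(n)\,\rho\big((\beta\alpha\beta^{-1})^n(A)\big)=\frac{1}{N}\sum_{n=1}^N\mu(n)\,\rho'\big(\alpha^n(B)\big),$$
and the right-hand side tends to $0$ as $N\to\infty$ by the hypothesis of linear disjointness of $\mu$ from $(\mathfrak{A},\alpha)$, applied to the state $\rho'$ and the element $B$. As $\rho$ and $A$ were arbitrary, this shows that $\mu$ is linearly disjoint from $(\mathfrak{A},\beta\alpha\beta^{-1})$, and the converse follows by applying the same implication with $\beta$ replaced by $\beta^{-1}$.

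I do not expect a genuine obstacle here; the only points needing care are that precomposition with $\beta$ carries states to states and that $\beta^{-1}$ is available to produce the witness $B=\beta^{-1}(A)$. Both use that $\beta$ is an \emph{automorphism} rather than a mere endomorphism, which is exactly why the hypothesis $\alpha,\beta\in\mathrm{Aut}(\mathfrak{A})$ is imposed.
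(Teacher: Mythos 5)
Your proof is correct and follows essentially the same route as the paper: both arguments rest on the identity $(\beta\alpha\beta^{-1})^n=\beta\alpha^n\beta^{-1}$ together with the substitution of the state $\rho\circ\beta$ and the element $\beta^{-1}(A)$ into the disjointness hypothesis for $(\mathfrak{A},\alpha)$. Your write-up is slightly more careful than the paper's (you verify that $\rho\circ\beta$ is a state and make the symmetry of conjugation explicit for the converse), but the core argument is identical.
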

\begin{proof}
Suppose that $\mu$ is linearly disjoint from $(\mathfrak{A},\alpha)$. Then
\begin{equation*}
\frac{1}{N}\sum_{n=1}^N\mu(n)\rho(\alpha^n(A))\to 0\quad \mbox{as }N\to\infty
\end{equation*}
for any $A$ in $\mathfrak{A}$ and state $\rho$ of $\mathfrak{A}$. Replace $\rho$ by $\rho\circ\beta$ and $A$ by $\beta^{-1}(A)$, we have that
\begin{equation*}
\frac{1}{N}\sum_{n=1}^N\mu(n)\rho(\beta\alpha^n(\beta^{-1}(A)))\to 0\quad \mbox{as }N\to\infty
\end{equation*}
for any $A$ in $\mathfrak{A}$ and state $\rho$ of $\mathfrak{A}$. Since $\beta\alpha^n\beta^{-1}=(\beta\alpha\beta^{-1})^n$, the lemma holds.
\end{proof}

\begin{lemma}
Let $\mathfrak{A}$ be a C$^*$ algebra and $\alpha\in Aut(\mathfrak{A})$. Suppose $\mathcal{I}$ is an ideal of $\mathfrak{A}$ such that $\alpha(\mathcal{I})=\mathcal{I}$. Let $\widetilde{\alpha}$ denote the induced automorphism on $\mathfrak{A}/\mathcal{I}$. If $\mu$ is linearly disjoint from $(\mathfrak{A},\alpha)$, then $\mu$ is linearly disjoint from $(\mathfrak{A}/\mathcal{I},\widetilde{\alpha})$.
\end{lemma}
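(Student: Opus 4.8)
The plan is to pull states back and lift elements through the quotient map, in the spirit of the argument for Lemma~\ref{sub}. Let $\pi:\mathfrak{A}\to\mathfrak{A}/\mathcal{I}$ be the canonical quotient *-homomorphism. Because $\alpha(\mathcal{I})=\mathcal{I}$, the induced map $\widetilde{\alpha}$ on $\mathfrak{A}/\mathcal{I}$ is a well-defined automorphism, and by its very construction it satisfies the intertwining relation $\widetilde{\alpha}\circ\pi=\pi\circ\alpha$. Iterating gives $\widetilde{\alpha}^{\,n}\circ\pi=\pi\circ\alpha^n$ for every $n\in\mathbb{N}$, and this structural identity is what drives the whole argument.

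First I would fix an arbitrary state $\overline{\rho}$ on $\mathfrak{A}/\mathcal{I}$ together with an arbitrary element $\overline{A}\in\mathfrak{A}/\mathcal{I}$. Pulling $\overline{\rho}$ back along $\pi$ produces a positive functional $\rho:=\overline{\rho}\circ\pi$ on $\mathfrak{A}$ of norm one, hence a state; and since $\pi$ is surjective I may choose a lift $A\in\mathfrak{A}$ with $\pi(A)=\overline{A}$. The intertwining relation then yields, for each $n$,
\begin{equation*}
\overline{\rho}\bigl(\widetilde{\alpha}^{\,n}(\overline{A})\bigr)=\overline{\rho}\bigl(\pi(\alpha^n(A))\bigr)=\rho\bigl(\alpha^n(A)\bigr).
\end{equation*}

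Multiplying by $\mu(n)$, summing over $n\leq N$, and dividing by $N$ identifies the Ces\`aro average over $(\mathfrak{A}/\mathcal{I},\widetilde{\alpha})$ with the corresponding average over $(\mathfrak{A},\alpha)$ termwise. The hypothesis that $\mu$ is linearly disjoint from $(\mathfrak{A},\alpha)$ forces the latter to tend to $0$ as $N\to\infty$, and since $\overline{\rho}$ and $\overline{A}$ were arbitrary this is exactly linear disjointness for $(\mathfrak{A}/\mathcal{I},\widetilde{\alpha})$. I do not expect a genuine obstacle here: the content reduces entirely to the intertwining identity together with the fact that both states and elements pass cleanly through the quotient. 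The only point deserving a moment's care is verifying that $\rho=\overline{\rho}\circ\pi$ really is a state, which follows because $\pi$ is a contractive *-homomorphism carrying an approximate unit of $\mathfrak{A}$ onto one of $\mathfrak{A}/\mathcal{I}$, so $\rho$ is positive of norm one.
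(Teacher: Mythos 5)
Your proof is correct and coincides with the paper's own argument: both pull the state back along the quotient map, lift the element through the surjection, and use the intertwining relation $\widetilde{\alpha}\circ\pi=\pi\circ\alpha$ to identify the two Ces\`aro averages. The only difference is that you spell out why $\overline{\rho}\circ\pi$ is a state, a point the paper leaves implicit.
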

\begin{proof}
Let $\Phi:\mathfrak{A}\to\mathfrak{A}/\mathcal{I}$ be the canonical quotient homomorphism. Then $\widetilde{\alpha}\circ\Phi=\Phi\circ\alpha$. For any $A$ in $\mathfrak{A}/\mathcal{I}$, there exists $A_0$ in $\mathfrak{A}$ such that $\Phi(A_0)=A$. Let $\rho$ be a state of $\mathfrak{A}/\mathcal{I}$. Then $\rho(\widetilde{\alpha}^n(A)) =\rho(\Phi(\alpha^n(A_0)))$. While $\rho\circ\Phi$ is a state on $\mathfrak{A}$, we see that the lemma holds.
\end{proof}

\begin{lemma}\label{state}
Let $\mathfrak{A}$ be a C$^*$ algebra and $\alpha\in Aut(\mathfrak{A})$. Let $\{\rho_m\}_m$ be a sequence of bounded linear functionals on $\mathfrak{A}$ such that $\|\rho_m-\rho\|\to 0$ as $m\to\infty$ for some bounded linear functional $\rho$ on $\mathfrak{A}$. If for $A$ in the unit ball of $\mathfrak{A}$ and any $m$
$$\frac{1}{N}\sum_{n\leq N}\mu(n)\rho_m(\alpha^n(A))\to 0\quad \mbox{as }N\to\infty$$
then
$$\frac{1}{N}\sum_{n\leq N}\mu(n)\rho(\alpha^n(A))\to 0\quad \mbox{as }N\to\infty.$$
\end{lemma}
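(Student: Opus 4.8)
The plan is to run a straightforward $\varepsilon/2$ approximation argument, resting on two elementary facts: the uniform bound $|\mu(n)|\le 1$ (so that $\frac1N\sum_{n\le N}|\mu(n)|\le 1$ for every $N$), and the fact that a $*$-automorphism of a C$^*$ algebra is isometric. First I would record this isometry observation: since $\alpha\in\mathrm{Aut}(\mathfrak A)$, each iterate $\alpha^n$ is again a $*$-automorphism and hence norm-preserving, so that $\|\alpha^n(A)\|=\|A\|\le 1$ for every $n$ whenever $A$ lies in the closed unit ball of $\mathfrak A$.

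Next I would split the Ces\`aro average for $\rho$ by inserting $\rho_m$, writing
\begin{equation*}
\frac{1}{N}\sum_{n\leq N}\mu(n)\rho(\alpha^n(A))
=\frac{1}{N}\sum_{n\leq N}\mu(n)(\rho-\rho_m)(\alpha^n(A))
+\frac{1}{N}\sum_{n\leq N}\mu(n)\rho_m(\alpha^n(A)).
\end{equation*}
The first term is controlled \emph{uniformly in $N$} by the crude estimate
\begin{equation*}
\left|\frac{1}{N}\sum_{n\leq N}\mu(n)(\rho-\rho_m)(\alpha^n(A))\right|
\le \frac{1}{N}\sum_{n\leq N}|\mu(n)|\,\|\rho-\rho_m\|\,\|\alpha^n(A)\|
\le \|\rho-\rho_m\|,
\end{equation*}
using the isometry of $\alpha^n$ and $\|A\|\le1$. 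The second term tends to $0$ as $N\to\infty$ by the hypothesis of the lemma applied to the fixed functional $\rho_m$.

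To finish, given $\varepsilon>0$ I would first use $\|\rho_m-\rho\|\to 0$ to fix an index $m$ with $\|\rho-\rho_m\|<\varepsilon/2$; this bounds the first term by $\varepsilon/2$ for \emph{all} $N$. Then, for that now-fixed $m$, the hypothesis furnishes an $N_0$ so that the second term has modulus $<\varepsilon/2$ for all $N\ge N_0$. Combining the two bounds gives modulus $<\varepsilon$ for all $N\ge N_0$, which is the claim. I do not expect any genuine obstacle here: the only point requiring a moment's attention is that the error from replacing $\rho$ by $\rho_m$ must be bounded independently of $N$ before $m$ is fixed, and this is exactly what the Ces\`aro bound $\frac1N\sum_{n\le N}|\mu(n)|\le 1$ together with the isometry of $\alpha$ provides.
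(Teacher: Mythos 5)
Your proof is correct and follows essentially the same route as the paper's: split off $(\rho-\rho_m)$, bound that piece uniformly in $N$ using $|\mu(n)|\le 1$ and the isometry of $\alpha^n$, fix $m$ with $\|\rho-\rho_m\|<\varepsilon/2$, and then take $N$ large for the fixed $\rho_m$. Your writeup is in fact slightly more careful than the paper's, which asserts the bound on the difference term without spelling out the isometry/unit-ball estimate (and writes an equality where the triangle inequality gives only $\le$).
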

\begin{proof}
We will show that for any $\epsilon>0$, there exists $N_0$ in $\mathbb{N}$ such that for $N>N_0$ we have $|\frac{1}{N}\sum_{n\leq N}\mu(n)\rho(\alpha^n(A))|<\epsilon$. Since $\|\rho_m-\rho\|\to 0$ as $m\to\infty$, there exist $m_0$ in $\mathbb{N}$ such that $\|\rho_m-\rho\|<\epsilon/2$ for $m\geq m_0$.
From the fact that $\frac{1}{N}\sum_{n\leq N}\mu(n)\rho_m(\alpha^n(A))\to 0\quad \mbox{as }N\to\infty$, we have there exists $N_0$ in $\mathbb{N}$ such that $|\frac{1}{N}\sum_{n\leq N}\mu(n)\rho_m(\alpha^n(A))|\leq \epsilon/2$ for $N\geq N_0$. Then for $N\geq N_0$
\begin{eqnarray*}
&&|\frac{1}{N}\sum_{n\leq N}\mu(n)\rho(\alpha^n(A))|\\
&=&|\frac{1}{N}\sum_{n\leq N}\mu(n)(\rho-\rho_m)(\alpha^n(A))|+|\frac{1}{N}\sum_{n\leq N}\mu(n)\rho_m(\alpha^n(A))|\\
&\leq &\epsilon/2+\epsilon/2=\epsilon.
\end{eqnarray*}
This proved the lemma.
\end{proof}

\begin{remark}
Let $\mathcal{A}$ be a C$^*$ algebra, $\alpha$ its automorphism, and $\varphi$ is an $\alpha$-invariant state on $\mathcal{A}$. Suppose that $\mu$ is linearly disjoint from $(\mathcal{A},\alpha)$. Let $\mathcal{M}=\pi_\varphi(\mathcal{A})''$ where $\pi_\varphi$ is the GNS representation arising from $\varphi$ and $\hat{\alpha}$ be the automorphism of $\mathcal{M}$ extended by $\alpha$. Then $\mu$ is weakly disjoint from $(\mathcal{M},\hat{\alpha})$.
\end{remark}

\section{Finite-dimensional C$^*$ Algebras}
In this section, we will show that the M\"{o}bius disjointness conjecture for exact C$^*$ algebra holds for finite-dimensional C$^*$ algebras with its automorphisms.

First we would like to give the matrix version for Lemma \ref{mu1}.
\begin{proposition}\label{matrix}
Let $p$ be positive integer and $0\leq l< p$. Let $U_1,\ldots,U_d$ be unitary matrices in $M_k(\mathbb{C})$, $A_1,\ldots,A_d$ matrices in $M_k(\mathbb{C})$ with $\|A_j\|\leq 1,j=1\ldots,d$, and $\phi_1,\ldots,\phi_d$ polynomials with real coeffiences  where $d$ is an positive integer. Then for arbitrary $h>0$,
$$\frac{1}{N}\sum_{n\leq N\atop n\equiv l(\mod p)}\mu(n)tr_k(U_1^{\phi_1(n)}A_1U_2^{\phi_2(n)}A_2\cdots U_d^{\phi_d(n)}A_d)= k^{d/2}O((\log N)^{-h})$$
uniformly in unitary elements $U_1,\ldots,U_d$, $A_1,\ldots,A_d$ and independent in coefficiences of $\phi_j$, $j=1,\ldots,d$, where $tr_k$ is the normalized trace of $M_k(\mathbb{C})$, the implied constant only depends on the maximal degrees of $\phi_j$s and p. If $d=2$, we have
$$\frac{1}{N}\sum_{n\leq N\atop n\equiv l(\mod p)}\mu(n)tr_k(U_1^{\phi_1(n)}A_1U_2^{\phi_2(n)}A_2)= O((\log N)^{-h})$$
which is independent of $k$.
\end{proposition}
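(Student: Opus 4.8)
The plan is to reduce the matrix sum to the scalar estimate of Lemma \ref{mu1} by diagonalising the unitaries, and then to control the combinatorial cost of the expansion by a Schatten-norm estimate that extracts exactly the factor $k^{d/2}$.

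First I would reduce to the case where each $U_j$ is diagonal. Writing the spectral decomposition $U_j=V_jD_jV_j^*$ with $V_j$ unitary and $D_j=\mathrm{diag}(e(\theta_{j,1}),\dots,e(\theta_{j,k}))$, the real power is $U_j^{\phi_j(n)}=V_jD_j^{\phi_j(n)}V_j^*$ with $D_j^{\phi_j(n)}=\mathrm{diag}\big(e(\theta_{j,s}\phi_j(n))\big)_s$. Using cyclicity of the trace and absorbing the $V_j,V_j^*$ into the neighbouring factors (set $\tilde A_m=V_m^*A_mV_{m+1}$, indices taken mod $d$ with $V_{d+1}=V_1$), one gets
$$tr_k\Big(\prod_{m=1}^d U_m^{\phi_m(n)}A_m\Big)=tr_k\Big(\prod_{m=1}^d D_m^{\phi_m(n)}\tilde A_m\Big),\qquad \|\tilde A_m\|\le 1,$$
the norm bound holding because the $V_j$ are unitary. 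Thus we may assume each $U_j=D_j$ is diagonal.

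Next, expanding the unnormalised trace over the diagonal index tuples gives
\begin{equation*}
\mathrm{Tr}\Big(\prod_{m=1}^d D_m^{\phi_m(n)}\tilde A_m\Big)=\sum_{i_1,\dots,i_d}\Big(\prod_{m=1}^d(\tilde A_m)_{i_mi_{m+1}}\Big)\, e\Big(\sum_{m=1}^d\theta_{m,i_m}\phi_m(n)\Big),
\end{equation*}
with $i_{d+1}=i_1$. Since each $\phi_m$ has real coefficients and $\theta_{m,i_m}\in\mathbb{R}$, the argument $\sum_m\theta_{m,i_m}\phi_m(n)$ is a real polynomial in $n$ of degree at most $D:=\max_m\deg\phi_m$. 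Interchanging the finite sum over indices with the sum over $n\le N$, $n\equiv l\ (\mathrm{mod}\ p)$ weighted by $\mu(n)$, Lemma \ref{mu1} applies to each inner sum and yields $\big|\tfrac1N\sum_{n\le N,\,n\equiv l(p)}\mu(n)e(\sum_m\theta_{m,i_m}\phi_m(n))\big|\le C(\log N)^{-h}$ with $C=C(h,p,D)$ independent of all the $\theta$'s, hence independent of the $U_j$ and of the coefficients of the $\phi_m$.

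The remaining, and main, point is to bound the resulting constant $\sum_{i_1,\dots,i_d}\prod_m|(\tilde A_m)_{i_mi_{m+1}}|$ by the correct power of $k$. Writing $B_m$ for the entrywise-modulus matrix with entries $|(\tilde A_m)_{ab}|$, this sum equals $\mathrm{Tr}(B_1\cdots B_d)$. By Hölder for Schatten norms with exponents $p_m=d$ and the monotonicity $\|B_m\|_{S_d}\le\|B_m\|_{S_2}=\|B_m\|_{\mathrm{HS}}=\|\tilde A_m\|_{\mathrm{HS}}\le\sqrt{k}\,\|\tilde A_m\|\le\sqrt{k}$ (valid for $d\ge2$), we obtain $\mathrm{Tr}(B_1\cdots B_d)\le k^{d/2}$; dividing by $k$ to pass from $\mathrm{Tr}$ to $tr_k$ gives the stated estimate (in fact the stronger $k^{d/2-1}O((\log N)^{-h})$). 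For $d=2$ the bound $\sum_{i,j}|(\tilde A_1)_{ij}||(\tilde A_2)_{ji}|\le\|\tilde A_1\|_{\mathrm{HS}}\|\tilde A_2\|_{\mathrm{HS}}\le k$ follows from a single Cauchy--Schwarz, so after dividing by $k$ the bound is $O((\log N)^{-h})$ uniformly in $k$, matching the second assertion. The genuine obstacle is exactly this final counting step: a naive term-by-term estimate would produce an uncontrolled power of $k$, and it is the Schatten--Hölder (respectively Cauchy--Schwarz) inequality together with $\|\tilde A_m\|\le 1$ that isolates the sharp factor.
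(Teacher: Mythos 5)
Your proposal is correct and follows essentially the same route as the paper's proof: diagonalize the unitaries, expand the trace into exponential sums, apply Lemma \ref{mu1} uniformly to each inner sum, and bound the combinatorial coefficient sum via Hilbert--Schmidt estimates on the entrywise-modulus matrices (with the single Cauchy--Schwarz giving $k$-independence when $d=2$). The only differences are refinements of the same idea: you justify the reduction to diagonal $U_j$ (which the paper assumes tacitly), and your Schatten--H\"older step yields the marginally sharper factor $k^{d/2-1}$, whereas the paper uses the cruder chain $tr_k(A_1'\cdots A_d')\le\|A_1'\|\cdots\|A_d'\|\le k^{d/2}$, which already suffices for the stated bound.
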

\begin{proof}
We assume that $U_j=diag(e(\theta_1^{(j)}),\ldots,e(\theta_k^{(j)}))$ for $j=1,\ldots, d$ and $A_j=(a_{ts}^{(j)})$ for $j=1,\ldots,d$. Then
\begin{equation*}
\begin{split}
&tr_k(U_1^{\phi_1(n)}A_1U_2^{\phi_2(n)}A_2\cdots U_d^{\phi_d(n)}A_d)\\
=&tr_k((e(\theta_t^{(1)}\phi_1(n))a_{ts}^{(1)})_{t,s}\cdots(e(\theta_t^{(d)}\phi_d(n))a_{ts}^{(d)})_{t,s})\\
=&\frac{1}{k}\sum_{t_1,\ldots,t_d=1}^ke(\theta_{t_1}^{(1)}\phi_1(n)+\cdots+\theta_{t_d}^{(d)}\phi_d(n))a_{t_1t_2}^{(1)}\cdots a_{t_dt_1}^{(d)}.
\end{split}
\end{equation*}
Hence
\begin{equation*}
\begin{split}
&\frac{1}{N}\sum_{n\leq N}\mu(n)tr_k(U_1^{\phi_1(n)}A_1U_2^{\phi_2(n)}A_2\cdots U_d^{\phi_d(n)}A_d)\\
&=\frac{1}{N}\sum_{n\leq N}\mu(n)\frac{1}{k}\sum_{t_1,\ldots,t_d=1}^ke(\theta_{t_1}^{(1)}\phi_1(n)+\cdots+\theta_{t_d}^{(d)}\phi_d(n))a_{t_1t_2}^{(1)}\cdots a_{t_dt_1}^{(d)}\\
&=\frac{1}{k}\sum_{t_1,\ldots,t_d=1}^k\left(\frac{1}{N}\sum_{n\leq N}\mu(n)e(\theta_{t_1}^{(1)}\phi_1(n)+\cdots+\theta_{t_d}^{(d)}\phi_d(n))\right)a_{t_1t_2}^{(1)}\cdots a_{t_dt_1}^{(d)}
\end{split}
\end{equation*}
By Lemma \ref{mu1},
$$\frac{1}{N}\sum_{n\leq N}\mu(n)e(\theta_{t_1}^{(1)}\phi_1(n)+\cdots+\theta_{t_d}^{(d)}\phi_d(n))=O((\log N)^{-h})$$
which the implied constant only depends on the maximal degree of $\phi_j$s and $p$. Note that $$\frac{1}{k} \sum_{t_1,\ldots,t_d=1}^k |a_{t_1t_2}^{(1)} \cdots a_{t_dt_1}^{(d)}|$$ is the trace of $A_1'A_2'\cdots A_d'$, where $A_j'=(|a_{ts}^{(j)}|)_{t,s}$ and $\|A_j'\|\leq \sqrt{k}$. Therefore
\begin{equation*}
\frac{1}{k}\sum_{t_1,\ldots,t_d=1}^k |a_{t_1t_2}^{(1)}\cdots a_{t_dt_1}^{(d)}|=tr_k(A_1'A_2'\cdots A_d')\leq \|A_1'\|\cdots\|A_d'\|\leq k^{d/2}
\end{equation*}
This completes the proof for general $d$. If $d=2$, we have
\begin{equation*}
\frac{1}{k}\sum_{t_1,t_2=1}^k |a_{t_1t_2}^{(1)}a_{t_2t_1}^{(2)}|\leq \frac{1}{k}(\sum_{t_1,t_2=1}^k|a_{t_1,t_2}^{(1)}|^2)^{1/2}(\sum_{t_1,t_2=1}^k|a_{t_1,t_2}^{(2)}|^2)^{1/2}=\|A_1\|_2\|A_2\|_2\leq 1.
\end{equation*}
Hence we see that when $d=2$, it is independent of $k$.
\end{proof}

\begin{theorem}\label{matrixm}
Suppose that $\mathfrak{A}$ be a finite dimensional C$^*$ algebra and $\alpha\in Aut(\mathfrak{A})$. Then $ht(\alpha)=0$ and $\mu$ is linearly disjoint from $(\mathfrak{A},\alpha)$.
\end{theorem}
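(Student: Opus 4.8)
The statement has two independent parts, and the plan is to dispatch the entropy claim first, since it is essentially immediate from the definition. Because $\mathfrak{A}$ is finite dimensional, for any finite $\Omega\subset\mathfrak{A}$ and any $\delta>0$ I would take $\mathfrak{B}=\mathfrak{A}$ itself together with $\varphi=\mathrm{id}$ and $\psi=\pi$ in the definition of $CPA(\pi,\Omega,\delta)$; the identity map and a $*$-representation are both contractive completely positive, and $\psi\circ\varphi=\pi$ exactly, so $(\varphi,\psi,\mathfrak{A})$ qualifies and $rcp(\Omega,\delta)\leq rank\,\mathfrak{A}$, a constant independent of $\Omega$, $\delta$, and $n$. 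Consequently $rcp(\Omega\cup\alpha\Omega\cup\cdots\cup\alpha^{n-1}\Omega,\delta)\leq rank\,\mathfrak{A}$ for every $n$, whence $ht(\alpha,\Omega,\delta)=\limsup_n\frac{1}{n}\log(rank\,\mathfrak{A})=0$ and therefore $ht(\alpha)=0$.

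For the disjointness I would combine the structure theory of finite dimensional C$^*$ algebras with Proposition \ref{matrix}. Write $\mathfrak{A}=\bigoplus_{i=1}^r M_{k_i}(\mathbb{C})$ with minimal central projections $z_1,\ldots,z_r$. Since $\alpha$ preserves the center and carries minimal central projections to minimal central projections, it induces a permutation $\sigma$ of $\{1,\ldots,r\}$ with $\alpha(z_i)=z_{\sigma(i)}$ and $k_i=k_{\sigma(i)}$. Letting $q$ be the order of $\sigma$, the automorphism $\beta:=\alpha^q$ fixes every $z_i$ and restricts to a $*$-automorphism of each block $M_{k_i}(\mathbb{C})$; as every automorphism of a full matrix algebra is inner, I may write $\beta|_{M_{k_i}}=Ad(U_i)$ for a unitary $U_i\in M_{k_i}(\mathbb{C})$.

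Now fix $A$ in the unit ball of $\mathfrak{A}$ and a state $\rho$, which on $\bigoplus_i M_{k_i}$ is given by positive semidefinite matrices $D_i\in M_{k_i}(\mathbb{C})$ with $\sum_i\mathrm{Tr}(D_i)=1$ via $\rho(B)=\sum_i\mathrm{Tr}(D_iB_i)$, where $B_i=z_iBz_i$. I would split the M\"obius sum into the residue classes $n\equiv l\pmod q$, $0\leq l<q$. Writing $n=qm+l$ gives $\alpha^n=\beta^m\circ\alpha^l$, so the block-$i$ component of $\alpha^n(A)$ equals $U_i^m(\alpha^l(A))_iU_i^{-m}$, and hence
\begin{equation*}
\rho(\alpha^n(A))=\sum_{i=1}^r k_i\,tr_{k_i}\bigl(U_i^{m}(\alpha^l(A))_iU_i^{-m}D_i\bigr),
\end{equation*}
where $m=(n-l)/q$ is a nonnegative integer precisely because $n\equiv l\pmod q$. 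The decisive point is that $m=(n-l)/q$ is a degree-one polynomial in $n$ with real coefficients, so each summand has exactly the shape treated by the $d=2$ case of Proposition \ref{matrix} (take $U_1=U_i$, $U_2=U_i^{-1}$, $\phi_1=\phi_2=(n-l)/q$, $A_1=(\alpha^l(A))_i$, $A_2=D_i$, all of norm at most $1$ since $\|(\alpha^l(A))_i\|\leq\|A\|\leq 1$ and $\|D_i\|\leq\mathrm{Tr}(D_i)\leq 1$). That case yields
\begin{equation*}
\frac{1}{N}\sum_{n\leq N,\; n\equiv l\,(q)}\mu(n)\,tr_{k_i}\bigl(U_i^{m}(\alpha^l(A))_iU_i^{-m}D_i\bigr)=O((\log N)^{-h})
\end{equation*}
uniformly in the data. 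Summing over the $r$ blocks with the fixed weights $k_i$ and then over the $q$ residue classes $l$ — all finite constants depending only on $\mathfrak{A}$ — gives $\frac{1}{N}\sum_{n\leq N}\mu(n)\rho(\alpha^n(A))=O((\log N)^{-h})\to 0$, which is the required disjointness.

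The main obstacle is conceptual rather than computational: although an automorphism of a single matrix algebra is inner, a general $\alpha$ only permutes the blocks, so $\alpha^n$ does not act by conjugation by a fixed family of unitaries. Passing to the power $\beta=\alpha^q$ and organizing the sum by residue classes modulo $q$ is exactly what turns $\alpha^n$ into an honest integer power of unitaries and, crucially, exposes the linear dependence on $n$ that Proposition \ref{matrix} demands; this is the reason the congruence-class version of the estimate (already built into Lemma \ref{mu1} and Proposition \ref{matrix}) is needed rather than the plain one. The only remaining check is the bookkeeping that the number of blocks, their dimensions, and the period $q$ are all finite and independent of $N$, so that the finitely many $O((\log N)^{-h})$ contributions combine without degrading the bound.
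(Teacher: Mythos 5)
Your proposal is correct, but it proves the disjointness half by a genuinely different route than the paper. The paper never touches the Wedderburn decomposition: it picks a faithful $\alpha$-invariant state $\psi$ (such as the canonical tracial state, which every automorphism preserves because it permutes blocks of equal size), passes to the GNS representation on the finite-dimensional Hilbert space $L^2(\mathfrak{A},\psi)$, where invariance of $\psi$ makes $\alpha$ spatially implemented, $\alpha=Ad(U)$ for a single unitary $U$, extends an arbitrary state of $\mathfrak{A}$ to a state of $\mathcal{B}(L^2(\mathfrak{A},\psi))$ by Hahn--Banach, and then applies the $d=2$ case of Proposition \ref{matrix} to the plain sum, i.e.\ with $p=1$, $\phi_1(n)=n$, $\phi_2(n)=-n$. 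You instead make $\alpha^q$ inner block-by-block and compensate by splitting into residue classes modulo $q$, which is exactly why you need the congruence-class form ($p=q$) of Proposition \ref{matrix} and Lemma \ref{mu1} together with the degree-one polynomials $(n-l)/q$. The trade-off is clear: the paper's GNS device removes the non-innerness of $\alpha$ in one stroke and needs only the $p=1$ estimate, but it quietly relies on two standard facts it does not verify (existence of a faithful $\alpha$-invariant state, and unitary implementability of $\alpha$ in the GNS representation); your argument needs neither, at the cost of the block-permutation bookkeeping and the arithmetic-progression version of the exponential-sum bound --- which the paper has available anyway, since Lemma \ref{mu1} is stated with the congruence condition. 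The entropy halves are essentially identical: the paper just says it follows from the definition, while you exhibit the witness $(\mathrm{id},\pi,\mathfrak{A})$ explicitly, which is the argument the paper is implicitly invoking.
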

\begin{proof}
Let $\psi$ be a faithful $\alpha$-invariant state on $\mathfrak{A}$. By the Gelfand-Naimark-Segal construction, we assume that $\mathfrak{A}$ acts on finite-dimensional Hilbert space $L^2(\mathfrak{A},\psi)$ and $\alpha(A)=UAU^*$ for some unitary operator $U$ on $L^2(\mathfrak{A},\psi)$. Let $\tau$ be the normalized trace. For any state $\rho$ on $\mathfrak{A}$, by Hahn-Banach theorem, there is a state $\rho'$ extending $\rho$. Then we have to show
$$\frac{1}{N}\sum_{n\leq N}\mu(n)\tau(U^nAU^{*n}B)\to 0\quad \mbox{as }N\to\infty$$
for any $A$, $B$ in $\mathcal{B}(L^2(\mathfrak{A},\psi))$, but it is directly from Lemma \ref{matrix}. Hence $\mu$ is linearly disjoint from $(\mathfrak{A},\alpha)$.

By the definition of Voiculescu-Brown entropy for exact C$^*$ algebras, we see that $ht(\alpha)=0$.
\end{proof}

\begin{proposition}\label{CL2}
Let $\mathcal{H}$ be a Hilbert space and $\alpha$ is an automorphism of $\mathcal{C}(\mathcal{H})$, the algebra of all compact operators on $\mathcal{H}$. Then $\mu$ is linearly disjoint from $(\mathcal{C}(\mathcal{H})+\mathbb{C}I,\alpha)$.
\end{proposition}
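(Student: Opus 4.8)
The plan is to reduce everything to the one-variable exponential-sum bound of Lemma \ref{mu1}. First I would record the structural fact that drives the proof: every automorphism of $\mathcal{C}(\mathcal{H})$ is spatial. Indeed $\alpha$ extends uniquely to an automorphism of the multiplier algebra $M(\mathcal{C}(\mathcal{H})) = \mathcal{B}(\mathcal{H})$, and every automorphism of $\mathcal{B}(\mathcal{H})$ is inner, so $\alpha = \mathrm{Ad}(U)$ for some unitary $U \in \mathcal{B}(\mathcal{H})$; the canonical extension of $\alpha$ to $\mathcal{C}(\mathcal{H}) + \mathbb{C}I$ fixes $I$. Since the linear span of the rank-one operators together with $I$ is norm dense in $\mathcal{C}(\mathcal{H}) + \mathbb{C}I$, Lemma \ref{dense} lets me restrict attention to $A = I$ and to rank-one $A = |\xi\rangle\langle\eta|$.

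The case $A = I$ is immediate: $\alpha^n(I) = I$ and $\rho(I) = 1$, so the average is $\frac1N\sum_{n\le N}\mu(n)$, which is the $\theta = 0$ instance of Lemma \ref{mu1} and tends to $0$. For the rank-one case I would use that $\mathcal{C}(\mathcal{H})^\ast$ is the trace class $\mathcal{L}^1(\mathcal{H})$: the restriction of the state $\rho$ to $\mathcal{C}(\mathcal{H})$ is given by a positive trace-class operator $T$ with $\mathrm{Tr}(T)\le 1$, via $\rho(A) = \mathrm{Tr}(TA)$. Because $\alpha^n(|\xi\rangle\langle\eta|) = |U^n\xi\rangle\langle U^n\eta|$, a short computation gives $\rho(\alpha^n(A)) = \langle T U^n\xi,\, U^n\eta\rangle$.

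Next I would diagonalize. Writing the spectral measure of $U$ as $U = \int_{\mathbb{T}} e(\theta)\,dE(\theta)$ and a spectral decomposition $T = \sum_k \lambda_k |f_k\rangle\langle f_k|$ with $\lambda_k \ge 0$ and $\sum_k \lambda_k = \mathrm{Tr}(T)\le 1$, each scalar measure $\sigma_{\xi,f}(\cdot) = \langle E(\cdot)\xi, f\rangle$ has total variation at most $\|\xi\|\,\|f\|$ (Cauchy--Schwarz over a partition, using that $E$ is projection-valued). Expanding $\langle U^n\xi, f_k\rangle$ and $\langle f_k, U^n\eta\rangle$ against these measures yields
\[
\rho(\alpha^n(A)) = \int_{\mathbb{T}}\int_{\mathbb{T}} e\bigl(n(\theta-\theta')\bigr)\, d\nu(\theta,\theta'),
\]
where $\nu = \sum_k \lambda_k\, \sigma_{\xi,f_k}\otimes\overline{\sigma_{\eta,f_k}}$ is a complex measure on $\mathbb{T}^2$ of finite total variation $\|\nu\| \le \mathrm{Tr}(T)\,\|\xi\|\,\|\eta\| \le \|\xi\|\,\|\eta\|$. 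Interchanging the finite average with the integral and invoking the uniform-in-$\theta$ estimate of Lemma \ref{mu1} for the phase $\phi = \theta - \theta'$ gives
\[
\left|\frac1N\sum_{n\le N}\mu(n)\rho(\alpha^n(A))\right| \le \|\nu\|\cdot O\bigl((\log N)^{-h}\bigr)\longrightarrow 0.
\]

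The crux of the argument---and the step I expect to require the most care---is verifying that $\nu$ has finite total variation, since only then can I pull the uniform bound of Lemma \ref{mu1} through the double integral. This is precisely where the trace-class description of $\rho|_{\mathcal{C}(\mathcal{H})}$ is essential: the summability $\sum_k \lambda_k < \infty$ absorbs the products $\|E(\cdot)\xi\|\,\|E(\cdot)\eta\|$ that a single continuous-spectrum term would otherwise leave unsummable. Together with the $A = I$ case and Lemma \ref{dense}, this establishes that $\mu$ is linearly disjoint from $(\mathcal{C}(\mathcal{H}) + \mathbb{C}I, \alpha)$.
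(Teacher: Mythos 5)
Your proof is correct, and it shares the paper's overall skeleton: both arguments establish that $\alpha=\mathrm{Ad}(U)$ is spatial (the paper builds $U$ explicitly from a system of matrix units rather than passing through the multiplier algebra), both invoke Lemma \ref{dense} to reduce to rank-one operators, and both rest ultimately on the uniformity in $\theta$ of Lemma \ref{mu1}. The difference is in how the key estimate is executed. The paper first reduces general states to vector states, using that $\mathcal{C}(\mathcal{H})^{*}=\mathcal{B}(\mathcal{H})_{*}$ together with a norm-approximation step (Lemma \ref{state}), and then disposes of the matrix coefficient by a tensor-product trick: it rewrites $\langle U^{n}\eta,\xi\rangle\langle U^{*n}\xi,\eta\rangle=\langle (U\otimes U^{*})^{n}(\eta\otimes\xi),\xi\otimes\eta\rangle$, so the averaged sum becomes a matrix coefficient of the single operator $\frac{1}{N}\sum_{n\leq N}\mu(n)(U\otimes U^{*})^{n}$, whose norm is at most $\max_{z\in\mathbb{T}}\left|\frac{1}{N}\sum_{n\leq N}\mu(n)z^{n}\right|=O((\log N)^{-h})$ by functional calculus for the unitary $U\otimes U^{*}$. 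You instead keep the full trace-class operator $T$, diagonalize it, and disintegrate the matrix coefficients into a double integral over $\mathbb{T}^{2}$ against $\nu=\sum_{k}\lambda_{k}\,\sigma_{\xi,f_{k}}\otimes\overline{\sigma_{\eta,f_{k}}}$, with the total-variation bound $\|\sigma_{\xi,f}\|\leq\|\xi\|\,\|f\|$ playing the role that the operator-norm bound plays in the paper. The two mechanisms are essentially equivalent---the spectral measure of $U\otimes U^{*}$ encodes exactly the product structure you write down---but the paper's packaging avoids the measure-theoretic bookkeeping (Fubini, total variation of product measures), while yours avoids the approximation of normal functionals by vector states, since the summability $\sum_{k}\lambda_{k}\leq 1$ absorbs everything at once; you also treat $A=I$ and the possible singular (compact-annihilating) part of the state explicitly, points the paper leaves implicit.
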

\begin{proof}
We will show that for any automorphism $\alpha$ of $\mathcal{C}(\mathcal{H})$ there exists a unitary operator $U$ on $\mathcal{H}$ such that $\alpha=Ad(U)$. Let $\{E_{jk}\}_{j,k}$ be a system of matrix units for $\mathcal{C}(\mathcal{H})$. Since $E_{11},\alpha(E_{11})$ are equivalent in $\mathcal{B}(\mathcal{H})$, there exist a partial isometry $V$ such that $VV^*=E_{11}$ and $V^*V=\alpha(E_{11})$. Then a unitary operator $U$ implementing $\alpha$ can be given as $U=\sum_{j=1}^\infty E_{j1}V\alpha(E_{1j})$.

It is known that the dual of $\mathcal{C}(\mathcal{H})$ is the space of all normal linear functionals of $\mathcal{B}(\mathcal{H})$, i.e. the predual $\mathcal{B}(\mathcal{H})_*$ of $\mathcal{B}(\mathcal{H})$. Hence we can focus on vector state on $\mathcal{B}(\mathcal{H})$.

Since every operator is finite sum of positive operator and every positive compact operator can be approximated by finite linearly combination of rank one projections. By Lemma \ref{dense}, it suffices to show that
$$\frac{1}{N}\sum_{n\leq N}\mu(n)\langle U^{*n}P_\xi U^n\eta,\eta\rangle\to 0,\quad \mbox{as } N\to\infty,$$
for any unit vector $\xi,\eta$ in $\mathcal{H}$.

Meanwhile
\begin{equation*}
\begin{split}
&\left|\frac{1}{N}\sum_{n\leq N}\mu(n)\langle U^{*n}P_\xi U^n\eta,\eta\rangle\right|
=\left|\frac{1}{N}\sum_{n\leq N}\mu(n)\langle \langle U^n\eta,\xi\rangle \xi,U^n\eta\rangle\right|\\
&=\left|\frac{1}{N}\sum_{n\leq N}\mu(n)\langle U^n\eta,\xi\rangle \langle U^{*n}\xi,\eta\rangle\right|\\
&=\left|\frac{1}{N}\sum_{n\leq N}\mu(n)\langle (U^n\otimes U^{*n})\eta\otimes\xi, \xi\otimes\eta\rangle\right|\\
&=\left|\left\langle \frac{1}{N}\sum_{n\leq N}\mu(n) (U\otimes U^{*})^n\eta\otimes\xi, \xi\otimes\eta\right\rangle\right|\\
&\leq \max_{z\in\mathbb{T}}|\frac{1}{N}\sum_{n\leq N}\mu(n) z^n|=O((\log N)^{-h})
\end{split}
\end{equation*}
for any fixed $h>0$ by \cite{Davenport37} or Lemma \ref{mu1}.
\end{proof}

\begin{question}
Suppose $\mathfrak{A}$ is an AF-algebra with an inner automorphim $\alpha$ and $ht(\alpha)=0$. Is the M\"{o}bius function $\mu$ linear disjoint from $(\mathfrak{A},\alpha)$?
\end{question}

\section{Finite von Neumann Algebras}
In the commutative case, let $F_\nu=(X_F, T, \nu_F)$ such that $T$ is measure-preserving, it is pointed out by P. Sarnak in \cite{Sar1} that such disjointness (orthogonality) is valid universally, i.e. for every $F_\nu$. We will present a similar result in noncommutative case.

\begin{proposition}\label{H1}
Let $\mathcal{M}$ be a finite von Neumann algebra with a faithful normal tracial state $\tau$ and $U$ a unitary element in $\mathcal{M}$. Then $\mu$ is weakly linearly disjoint from $(\mathcal{M},Ad(U))$.
\end{proposition}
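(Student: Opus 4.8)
The statement asks us to show that for a finite von Neumann algebra $\mathcal{M}$ with faithful normal trace $\tau$ and a fixed unitary $U \in \mathcal{M}$, the Möbius function is weakly linearly disjoint from $(\mathcal{M}, Ad(U))$. The plan is to reduce the averaging sum to a scalar exponential sum governed by the spectral measure of $U$, and then apply the uniform Davenport bound from Lemma \ref{mu1}.

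First I would set up the spectral picture. Since $U$ is unitary, by the spectral theorem there is a projection-valued measure $E$ on the circle $\mathbb{T}$ with $U = \int_{\mathbb{T}} z \, dE(z)$, so that $U^n = \int_{\mathbb{T}} z^n \, dE(z)$ for every $n \in \mathbb{Z}$. By Lemma \ref{dense} it suffices to verify the disjointness condition on a norm-dense subset and, by a standard polarization and GNS argument (every normal state on a finite von Neumann algebra is a sum/limit of vector states in the standard $L^2(\mathcal{M},\tau)$ representation, and Lemma \ref{state} lets us pass from a dense family of functionals to their limits), it is enough to treat the case where $\rho$ is a normal state implemented by vectors in $L^2(\mathcal{M}, \tau)$. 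For $A \in \mathcal{M}$ the quantity $\rho\bigl((Ad\,U)^n(A)\bigr) = \rho(U^n A U^{*n})$ then unfolds, via the double spectral integral, into a scalar expression of the form
\begin{equation*}
\rho(U^n A U^{*n}) = \int_{\mathbb{T}}\int_{\mathbb{T}} z^n \,\overline{w}^{\,n}\, d\nu_{A,\rho}(z,w),
\end{equation*}
where $\nu_{A,\rho}$ is a complex measure of bounded total variation on $\mathbb{T}\times\mathbb{T}$ built from $E$, $A$, and the vectors defining $\rho$. This is exactly the step carried out concretely in the proof of Proposition \ref{CL2}, where $\langle U^{*n}P_\xi U^n \eta,\eta\rangle$ was rewritten as $\langle (U\otimes U^*)^n\,\eta\otimes\xi,\ \xi\otimes\eta\rangle$; here I would use the tensor product $U \otimes U^*$ acting on $L^2(\mathcal{M},\tau)\otimes \overline{L^2(\mathcal{M},\tau)}$ and diagonalize it through the product spectral measure.

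With this representation in hand, I would interchange the finite average over $n$ with the integral and estimate
\begin{equation*}
\left|\frac{1}{N}\sum_{n=1}^N \mu(n)\,\rho(U^nAU^{*n})\right|
\le \int_{\mathbb{T}}\int_{\mathbb{T}} \left|\frac{1}{N}\sum_{n=1}^N \mu(n)\,(z\overline{w})^n\right| d|\nu_{A,\rho}|(z,w).
\end{equation*}
Writing $z\overline{w} = e(\theta)$ for a real $\theta$ depending on the pair $(z,w)$, the inner average is exactly the one-dimensional linear exponential sum controlled by Lemma \ref{mu1}, which gives the bound $O((\log N)^{-h})$ uniformly in $\theta$. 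Since this bound is independent of the point $(z,w)$, and $\nu_{A,\rho}$ has finite total variation $\|\nu_{A,\rho}\| \le \|A\|\,\|\rho\|$, the dominated integral is $O((\log N)^{-h}) \to 0$ as $N \to \infty$, completing the argument.

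The main obstacle I anticipate is not the exponential-sum estimate—that is handed to us uniformly by Lemma \ref{mu1}—but rather making the reduction to the scalar spectral integral clean when $\rho$ is an arbitrary normal state rather than a vector state, and ensuring the interchange of the finite sum with the spectral integral and the uniform domination are fully justified (finiteness of the total variation of $\nu_{A,\rho}$ is where the normality of $\rho$ and the finite trace $\tau$ are used). Once the disjointness is established for a suitably rich dense family of $A$ and for normal states realized by $L^2$-vectors, Lemma \ref{dense} and Lemma \ref{state} promote it to all of $\mathcal{M}$ and to every normal state $\rho$.
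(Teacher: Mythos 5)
Your reduction hinges on the claim that the set function $\nu_{A,\rho}(S\times T)=\rho\bigl(E(S)\,A\,E(T)\bigr)$ extends to a complex measure on $\mathbb{T}\times\mathbb{T}$ with total variation at most $\|A\|\,\|\rho\|$. That claim is false, and this is a genuine gap rather than a technicality: bimeasures built from two spectral measures and a bounded operator are in general \emph{not} of bounded variation (this is precisely the classical obstruction in the theory of double operator integrals). A concrete counterexample inside a finite von Neumann algebra: take $\mathcal{M}=M_n(\mathbb{C})$ with $\tau=\frac{1}{n}\mathrm{Tr}$, let $U=\mathrm{diag}(z_1,\ldots,z_n)$ have distinct eigenvalues, so $E(\{z_i\})=e_{ii}$, let $A$ be the unitary Fourier matrix (all $|A_{ij}|=1/\sqrt{n}$), and let $\rho=\langle\,\cdot\,\xi,\xi\rangle$ be the normal vector state with $\xi=\sqrt{n}\,P\in L^2(\mathcal{M},\tau)$, where $P$ is the rank-one projection onto the normalized all-ones vector. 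A direct computation gives $\nu_{A,\rho}(\{z_i\}\times\{z_j\})=\frac{1}{n}A_{ij}$, whose total variation is $\frac{1}{n}\sum_{i,j}|A_{ij}|=\sqrt{n}$, not $O(1)$; an infinite direct sum $\bigoplus_k M_{16^k}$ with suitable weights then yields a finite von Neumann algebra, a unitary, a contraction and a normal state for which the total variation is infinite, so your dominating integral simply does not exist. Note also that your anticipated fix (``normality of $\rho$ and the finite trace'') cannot close the gap: the counterexample is a normal state on a finite algebra. The deeper point is that in standard form \emph{every} normal state of $\mathcal{M}$ is an $L^2$-vector state, so your reduction to $L^2$-vector states does not shrink the class of functionals at all.

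What is needed, and what the paper's proof does, is to keep the Hilbert--Schmidt orthogonality that a total-variation bound throws away. The paper first invokes Haagerup's Lemma 2.10, the density of $\mathcal{M}\eta$ in $L^2(\mathcal{M},\tau)$, and Lemma \ref{state} to reduce to states $\rho=\langle\,\cdot\,A\eta,A\eta\rangle=\tau(\,\cdot\,AA^*)$ with $A\in\mathcal{M}$ and $\eta$ the trace vector, i.e.\ states whose density matrix $AA^*$ is bounded, hence lies in $L^2(\mathcal{M},\tau)$ --- a genuinely smaller class than all normal states. It then approximates $U$ by a finite-spectrum unitary $V=\sum_{k}e(\theta_k)P_k$ with $\|U^n-V^n\|\le\epsilon$ for $n\le N$, and estimates the resulting finite double sum by Cauchy--Schwarz in $\|\cdot\|_2$:
$$\sum_{l,k}\bigl|\tau(P_kTP_lAA^*P_k)\bigr|\le\Bigl(\sum_{l,k}\|P_kTP_l\|_2^2\Bigr)^{1/2}\Bigl(\sum_{l,k}\|P_kAA^*P_l\|_2^2\Bigr)^{1/2}=\|T\|_2\,\|AA^*\|_2,$$
a bound independent of the number of spectral projections precisely because $\sum_{l,k}\|P_kTP_l\|_2^2=\|T\|_2^2$. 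Your scheme can be repaired along the same lines: for a density matrix $h\in L^2(\mathcal{M},\tau)$ the identical Cauchy--Schwarz computation shows that your bimeasure \emph{does} have total variation at most $\|A\|_2\|h\|_2$, after which your integral argument runs and Lemma \ref{state} promotes the conclusion to arbitrary normal states. As written, however, the proposal is not correct.
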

\begin{proof}
We assume that $\mathcal{M}$ acts standardly on the Hilbert space $L^2(\mathcal{M},\tau)$. To show $\mu$ is weakly linearly disjoint from $(\mathcal{M},Ad(U))$, we have to estimate
$$\frac{1}{N}\sum_{n\leq N}\mu(n)\rho(U^{*n}TU^{n}))$$
for any $T$ in the unit ball of $\mathcal{M}$ and any normal state $\rho$.
Let $\eta$ be the trace vector associated to $\tau$ in $L^2(\mathcal{M},\tau)$. By Lemma 2.10 in \cite{Haa75}, the fact that $\mathcal{M}\eta$ is dense in $L^2(\mathcal{M},\tau)$ and Lemma \ref{state}, we may assume that $\rho(\cdot)=\langle\cdot A\eta,A\eta\rangle$, where $\eta$ is the canonical trace vector in $L^2(\mathcal{M},\tau)$ and $A\in\mathcal{M}$.

By spectral decomposition theorem, for any $\epsilon>0$ there is a unitary element $V=\sum_{k=1}^m e(\theta_k)P_k$ in $\mathcal{M}$ such that $\|U^n-V^n\|\leq \epsilon$, where $P_k,k=1,\ldots,m$ are orthogonal projections and $n=1,\ldots,N$. We have
\begin{equation*}
\begin{split}
&\left|\frac{1}{N}\sum_{n=1}^N\mu(n)\langle TU^nA\eta,U^nA\eta \rangle\right|\\
\leq& \left|\frac{1}{N}\sum_{n=1}^N\mu(n)\langle T(U^n-V^n)A\eta,U^nA\eta\rangle\right|+\left|\frac{1}{N}\sum_{n=1}^N\mu(n)\langle TV^nA\eta,(U^n-V^n)A\eta\rangle\right|\\
&+\left|\frac{1}{N}\sum_{n=1}^N\mu(n)\langle TV^nA\eta,V^nA\eta\rangle\right|\\
\leq &2\epsilon+\left|\sum_{l,k}\frac{1}{N}\sum_{n=1}^N\mu(n)e(n(\theta_l-\theta_k))\langle TP_lA\eta,P_kA\eta\rangle\right|\\
=&2\epsilon+\left|\sum_{l,k}\frac{1}{N}\sum_{n=1}^N\mu(n)e(n(\theta_l-\theta_k))\tau(P_kTP_lAA^*)\right|\\
\leq & 2\epsilon+O((\log N)^{-h})\left|\sum_{l,k}\tau(P_kTP_lAA^*P_k)\right|\\
\leq & 2\epsilon+O((\log N)^{-h})\sum_{l,k}\|P_kTP_l\|_2\|P_kAA^*P_l\|_2\\
\leq & 2\epsilon+O((\log N)^{-h})(\sum_{l,k}\|P_kTP_l\|_2^2)^{1/2}(\sum_{l,k}\|P_kAA^*P_l\|_2^2)^{1/2}\\
=&2\epsilon+O((\log N)^{-h})\|T\|_2\|AA^*\|_2\leq 2\epsilon+O((\log N)^{-h})=O((\log N)^{-h})
\end{split}
\end{equation*}
Hence $\mu$ is weakly linearly disjoint from $(\mathcal{M},Ad(U))$.
\end{proof}

\begin{proposition}\label{H2}
Let $\mathcal{M}$ be a finite von Neumann algebra with a faithful normal tracial state $\tau$. Suppose $\alpha$ is a trace-preserving automorphism of $\mathcal{M}$. Then $\mu$ is weakly linearly disjoint from $(\mathcal{M},\alpha)$.
\end{proposition}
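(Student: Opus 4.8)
The plan is to reduce the statement to Proposition \ref{H1} by passing to the crossed product, which converts the (possibly outer) automorphism $\alpha$ into an inner one implemented by a genuine unitary of a finite von Neumann algebra. Although $\alpha$ is trace-preserving and hence implemented on $L^2(\mathcal{M},\tau)$ by the unitary $U_0$ determined by $U_0(x\eta)=\alpha(x)\eta$, this $U_0$ need not lie in $\mathcal{M}$; its spectral projections live only in $\mathcal{B}(L^2(\mathcal{M},\tau))$. Consequently a naive repetition of the argument of Proposition \ref{H1} is unavailable: that argument crucially used that the spectral projections $P_k$ of the implementing unitary belong to $\mathcal{M}$, so that the Parseval-type identity $\sum_{l,k}\|P_kXP_l\|_2^2=\|X\|_2^2$ bounds the relevant coefficient sum \emph{independently of the number of projections}. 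Without $P_k\in\mathcal{M}$ this control fails, and since the number of projections needed to approximate all powers $U_0^n$ for $n\leq N$ grows with $N$, the error term is no longer tamed.

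To circumvent this, I would form the crossed product $\mathcal{N}=\mathcal{M}\rtimes_\alpha\mathbb{Z}$. Since $\alpha$ preserves $\tau$, $\mathcal{N}$ is a finite von Neumann algebra carrying a faithful normal tracial state $\hat\tau$ with $\hat\tau|_{\mathcal{M}}=\tau$, together with the canonical faithful normal $\hat\tau$-preserving conditional expectation $E\colon\mathcal{N}\to\mathcal{M}$; moreover $\mathcal{N}$ contains a unitary $U$ with $UxU^*=\alpha(x)$ for all $x\in\mathcal{M}$, so that $\mathrm{Ad}(U)^n(A)=\alpha^n(A)$ for every $A\in\mathcal{M}$ and $n\geq 1$. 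This is the von Neumann algebra analogue of the reduction carried out in Lemma \ref{cross}.

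Next I would apply Proposition \ref{H1} to the pair $(\mathcal{N},\mathrm{Ad}(U))$: as $U$ is an honest unitary element of the finite von Neumann algebra $\mathcal{N}$ equipped with the faithful normal trace $\hat\tau$, the proposition gives that $\mu$ is weakly linearly disjoint from $(\mathcal{N},\mathrm{Ad}(U))$. Now let $\rho$ be any normal state on $\mathcal{M}$ and $A\in\mathcal{M}$. Because $E$ is a normal conditional expectation, $\rho\circ E$ is a normal state on $\mathcal{N}$, and since $\alpha^n(A)\in\mathcal{M}$ while $E$ restricts to the identity on $\mathcal{M}$ we obtain
$$(\rho\circ E)\big(\mathrm{Ad}(U)^n(A)\big)=\rho\big(E(\alpha^n(A))\big)=\rho(\alpha^n(A)).$$
Hence $\frac{1}{N}\sum_{n\leq N}\mu(n)\rho(\alpha^n(A))=\frac{1}{N}\sum_{n\leq N}\mu(n)(\rho\circ E)(\mathrm{Ad}(U)^n(A))\to 0$ as $N\to\infty$ by the previous step, which is precisely the assertion that $\mu$ is weakly linearly disjoint from $(\mathcal{M},\alpha)$.

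The only genuine work is the first, structural step, and the main obstacle is exactly the one flagged above: one must move from the $L^2$-implementing unitary, whose spectral data is external to $\mathcal{M}$, to the crossed product, where the implementing unitary is internal and its spectral projections lie in the finite algebra. Once this is arranged, the trace $\hat\tau$ on $\mathcal{N}$ supplies the Hilbert--Schmidt control required by Proposition \ref{H1}, and the normality of the canonical conditional expectation is precisely what guarantees that normal states on $\mathcal{M}$ pull back to normal states on $\mathcal{N}$. The facts invoked about $\mathcal{M}\rtimes_\alpha\mathbb{Z}$, namely the existence of a faithful normal tracial state and of a normal trace-preserving conditional expectation onto $\mathcal{M}$, are standard for crossed products by trace-preserving $\mathbb{Z}$-actions.
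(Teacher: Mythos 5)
Your proof is correct and takes essentially the same route as the paper: pass to the finite crossed product $\mathcal{M}\rtimes_\alpha\mathbb{Z}$ (finite because $\alpha$ preserves $\tau$) and apply Proposition \ref{H1} to the inner automorphism $\mathrm{Ad}(U)$. Your use of the normal conditional expectation $E\colon\mathcal{M}\rtimes_\alpha\mathbb{Z}\to\mathcal{M}$ to pull normal states of $\mathcal{M}$ back to normal states of the crossed product is a detail the paper leaves implicit, and it is genuinely needed there, since the Hahn--Banach extension used in Lemma \ref{sub} need not produce a normal state.
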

\begin{proof}
We assume that $\alpha^n\neq id$ for any nonzero $n\in\mathbb{Z}$. By considering the crossed product $\mathcal{M}\rtimes_\alpha\mathbb{Z}$ of $\mathcal{M}$ by $\mathbb{Z}$, we have that $\mathcal{M}\rtimes_\alpha\mathbb{Z}$ is a finite von Neumann algebra. Let $U$ be the unitary element in $\mathcal{M}\rtimes_\alpha\mathbb{Z}$ implementing $\alpha$. Then $AdU$ is an inner automorphism of $\mathcal{M}\rtimes_\alpha\mathbb{Z}$ and $\mu$ is weakly linearly disjoint from $(\mathcal{M}\rtimes_\alpha\mathbb{Z},AdU)$. Hence $\mu$ is weakly linearly disjoint from $(\mathcal{M},\alpha)$.
\end{proof}

\begin{remark}
In \cite{CS75}, Connes and St{\o}rmer showed that the dynamic entropy for finite von Neumann algebra of the shift automorphism of the hyperfinite factor of type II$_1$ is greater than zero. But the proposition \ref{H2} shows that the M\"{o}bious function is weakly linearly disjoint from the shifts. As the classical case, this is not our main interest.
\end{remark}

\begin{question}
Is $\mu$ weakly linearly disjoint from $(\mathcal{M},Ad(U))$, where $\mathcal{M}$ is a factor of type III and $U$ is a unitary element in $\mathcal{M}$?
\end{question}

\section{CAR Algebras}
In this section, we will investigate the M\"{o}bius disjointness for Canonical Anticommutation Relation algebras with Bogoliubov automorphisms.

Let $\mathcal{H}$ be a Hilbert space. The full Fock space $T(\mathcal{H})$ of $\mathcal{H}$ is given by
$$T(\mathcal{H})=\bigoplus_{n=0}^\infty\otimes^n\mathcal{H},$$
where $\otimes^0\mathcal{H}$ is $\mathbb{C}1$.

Define a linear map $P$ of $T(\mathcal{H})$ given by
$$P(\xi_1\otimes\cdots\otimes\xi_n)=\frac{1}{n!}\sum_{\sigma\in S_n}(-1)^\sigma\xi_{\sigma(1)}\otimes\cdots\otimes\xi_{\sigma(n)},$$
where $S_n$ is the permutation group on $\{1,2,\ldots,n\}$. One can check that $P$ is a projection on $T(\mathcal{H})$. We denote by $\Lambda^n\mathcal{H}$ the Hilbert subspace $P(\otimes^n\mathcal{H})$ and by $\Lambda \mathcal{H}$ the Hilbert space $P(T(\mathcal{H}))=\bigoplus_{n=0}^\infty\Lambda^n\mathcal{H}$.

For $f\in \mathcal{H}$ we define a linearly map $a(f)$ on $\Lambda\mathcal{H}$ given by $a(f):\Lambda^n\mathcal{H}\to \Lambda^{n+1}\mathcal{H}$
$$a(f)(\xi_1\wedge\cdots\wedge \xi_n)=f\wedge \xi_1\wedge\cdots\wedge \xi_n$$
The operators $a(f),a(g)$ satisfy Canonical Anticommutation Relation (CAR):
\begin{equation}
\begin{split}
a(f)a(g)+a(g)a(f)&=0\\
a(f)a(g)^*+a(g)^*a(f)&=\langle f,g\rangle 1
\end{split}
\end{equation}

Then the CAR algebra $CAR(\mathcal{H})$ the *-algebra generated by $a(f)$ represented on $\Lambda\mathcal{H}$. We have that $CAR(\mathcal{H})$ is a C$^*$ algebra when $\mathcal{H}$ is a Hilbert space. When $dim\mathcal{H}=1$, one see that $CAR(\mathcal{H})$ is isomorphic to $M_2(\mathbb{C})$. In this case $a(\xi)$ is a partial isometry from $\mathbb{C}1$ onto $\mathbb{C}\xi$ and $\Lambda\mathcal{H}$ is 2-dimensional Hilbert space spanned by $1,\xi$.

For any unitary operator $U$ on $\mathcal{H}$, $\alpha_U$ is an automorphism of $CAR(\mathcal{H})$ given by $\alpha_U(a(f))=a(Uf)$ for any $f$ in $\mathcal{H}$. The automorphism $\alpha_U$ arising from a unitary operator $U$ on $\mathcal{H}$ is called Bogoliubov automorphism of $CAR(\mathcal{H})$.

Let $T\in\mathcal{B}(\mathcal{H})$ and $0\leq T\leq 1$. The quasi-free state $\varphi_T$ on $CAR(\mathcal{H})$ given by
$$\varphi_T(a(g_m)^*\cdots a(g_1)^*a(f_1)\cdots a(f_n))=\delta_{m,n}det(\langle Tf_i,g_j\rangle)_{i,j=1}^n$$ and $\varphi_T(1)=1$

\begin{lemma}\label{C1}
Let $U$ be a unitary operator on a Hilbert space $\mathcal{H}$ whose spectrum has absolutely continuous part. Then $\mu$ is not (weakly) linearly disjoint from $(\mathcal{B}(\mathcal{H}),Ad(U))$ and $\mu$ is not linearly disjoint from $(CAR(\mathcal{H}),\alpha_U)$.
\end{lemma}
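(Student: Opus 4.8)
The goal is to \emph{refute} disjointness, so rather than bounding a Cesàro average I would produce one explicit (normal) state together with one observable for which the average stays bounded away from $0$. The arithmetic input is not cancellation but its opposite: the squarefree integers have positive density, so
\[
\frac{1}{N}\sum_{n\leq N}\mu(n)^2=\frac{1}{N}\#\{n\leq N:\ n\text{ squarefree}\}\longrightarrow\frac{6}{\pi^2}>0 .
\]
Hence if I can arrange an observable whose expectation at time $n$ \emph{equals} $\mu(n)$, the average will converge to $6/\pi^2$ rather than to $0$. This is exactly where the absolutely continuous part is used: it is what lets $\mu$ be encoded into a single fixed bounded operator, in sharp contrast with the pure point case, where every matrix coefficient $\langle U^n\zeta,\zeta\rangle$ is an almost periodic sequence and Lemma \ref{mu1} forces the averages to vanish.

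For $(\mathcal B(\mathcal H),\mathrm{Ad}(U))$, I would first pass to the absolutely continuous part of $U$ and extract a \emph{wandering vector} $f_0$, i.e.\ a unit vector for which the orbit $f_k:=U^kf_0$, $k\in\mathbb Z$, is orthonormal; write $\mathcal K=\overline{\operatorname{span}}\{f_k\}$, a $U$-reducing subspace on which $U$ is a bilateral shift. I then define $A\in\mathcal B(\mathcal H)$ to be the diagonal operator $Af_k=\mu(-k)f_k$ on $\mathcal K$ and $A=0$ on $\mathcal K^{\perp}$, so that $\|A\|\leq1$, and I take the (normal) vector state $\rho=\langle\,\cdot\,f_0,f_0\rangle$. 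A one-line computation gives $\rho(\mathrm{Ad}(U)^n(A))=\langle A f_{-n},f_{-n}\rangle=\mu(n)$, whence $\frac1N\sum_{n\leq N}\mu(n)\rho(\mathrm{Ad}(U)^n(A))=\frac1N\sum_{n\leq N}\mu(n)^2\to 6/\pi^2$, so $\mu$ is not weakly linearly disjoint from $(\mathcal B(\mathcal H),\mathrm{Ad}(U))$.

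The CAR case is parallel, with the diagonal operator replaced by a number operator and the Möbius data pushed into the state. With $f_0$ as above, set $a_n:=a(f_n)=a(U^nf_0)$, so that $\alpha_U(a_n)=a_{n+1}$, and let $A:=2a_0^*a_0-\mathbf 1\in CAR(\mathcal H)$, a self-adjoint element with $\alpha_U^n(A)=2a_n^*a_n-\mathbf 1$. Choosing $0\leq T\leq1$ with $Tf_k=\tfrac{1+\mu(k)}2 f_k$ on $\mathcal K$ (eigenvalues in $\{0,\tfrac12,1\}\subset[0,1]$), the defining formula for the quasi-free state gives $\varphi_T(a_n^*a_n)=\langle Tf_n,f_n\rangle=\tfrac{1+\mu(n)}2$, so $\varphi_T(\alpha_U^n(A))=\mu(n)$ and again $\frac1N\sum_{n\leq N}\mu(n)\varphi_T(\alpha_U^n(A))\to 6/\pi^2>0$. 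Alternatively one restricts to the $\alpha_U$-invariant subalgebra $CAR(\mathcal K)\cong CAR(\ell^2(\mathbb Z))$ and transfers the conclusion to $CAR(\mathcal H)$ by (the contrapositive of) Lemma \ref{sub}. Either way $\mu$ is not linearly disjoint from $(CAR(\mathcal H),\alpha_U)$.

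The hard part is the very first step of each construction, namely producing the wandering vector. An orthonormal orbit $\{U^nf_0\}$ exists exactly when $U$ has a \emph{uniform} (Lebesgue) absolutely continuous component: writing the cyclic a.c.\ piece as multiplication by $z$ on $L^2(\mathbb T,w\,dx)$, a wandering vector must satisfy $|f_0|^2w\equiv1$, which forces $w>0$ a.e. Under the weaker hypothesis the a.c.\ part may be supported on a proper set $E\subset\mathbb T$, and then no exact wandering vector exists. I would resolve this by first flattening the density, using the unitary equivalence $L^2(\mathbb T,w\,dx)\cong L^2(E,dx)$ to reduce to $M_z$ on $L^2(E,dx)$, and then replacing the orthonormal orbit by an \emph{almost} wandering vector: since the spectral measure is a.c., $\langle U^nf_0,f_0\rangle\to0$ by Riemann--Lebesgue, so the orbit is asymptotically orthonormal and the identities above hold up to off-diagonal corrections. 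Making this quantitative — showing that these small corrections do not destroy the positive density supplied by $\sum_{n\le N}\mu(n)^2$, uniformly in $N$, so that the limit stays $\geq c>0$ — is the one genuinely delicate point; the remainder is bookkeeping.
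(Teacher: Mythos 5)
Your core construction coincides with the paper's. The paper realizes the a.c.\ part as $\int_{\mathbb{T}}^{\oplus}\mathcal{H}_z\,d\lambda(z)$, picks fiberwise unit vectors $\xi_z$, and sets $\xi_k=\int_{\mathbb{T}}^{\oplus}z^k\xi_z\,d\lambda(z)$: this is exactly your wandering-vector orbit ($U\xi_k=\xi_{k+1}$, orthonormal). It then takes the diagonal operator $T=\sum_{k\geq 1}\mu(k)P_{-k}$ and the vector state at $\xi_0$, obtaining $\frac1N\sum_{n\leq N}\mu(n)\langle U^nTU^{*n}\xi_0,\xi_0\rangle=\frac1N\sum_{n\leq N}|\mu(n)|\to 6/\pi^2$, and for the CAR part it evaluates the quasi-free state $\varphi_{(T+I)/2}$ on $a(\xi_0)^*a(\xi_0)$ --- the same as your $\varphi_T$ applied to $2a_0^*a_0-\mathbf{1}$. (Your indexing in the CAR step is in fact the correct one: with the paper's choice $T=\sum_{k\geq1}\mu(k)P_{-k}$ one gets $\langle\tfrac{T+I}{2}U^n\xi_0,U^n\xi_0\rangle=\tfrac12$ rather than $\tfrac{1+\mu(n)}{2}$; the weights must sit on $P_{+k}$, a harmless slip that you silently repair.)

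Where you go beyond the paper is in flagging that the construction needs more than ``nonzero a.c.\ part.'' You are right, and the objection applies verbatim to the paper's own proof: choosing fiberwise \emph{unit} vectors requires $\mathcal{H}_z\neq 0$ for $\lambda$-a.e.\ $z$, i.e.\ a wandering vector exists if and only if the a.c.\ spectral density is positive a.e.\ on $\mathbb{T}$ (equivalently $U$ has a bilateral-shift summand). So both your argument and the paper's establish the lemma only under this implicitly stronger hypothesis. However, your proposed repair for a.c.\ spectrum supported on a proper set $E\subset\mathbb{T}$ is not ``bookkeeping,'' and as sketched it would fail. First, no choice of $f_0$ can make the orbit even a Riesz sequence: the Gram matrix of $\{U^nf_0\}$ is the Toeplitz matrix of the density $w=|f_0|^2$, a probability density vanishing on $E^c$, and by Cauchy--Schwarz $\sum_{j\neq 0}|\widehat{w}(j)|^2=\|w\|_2^2-1\geq\lambda(E)^{-1}-1>0$, so the off-diagonal inner products cannot be made uniformly small. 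Second, if you run the natural Bessel-sequence version of your diagonal operator, say $A=\sum_{k\geq1}\mu(k)\,|e_{-k}\rangle\langle e_{-k}|$ with $e_k=z^k\chi_E/\sqrt{\lambda(E)}$ and the state at $e_0$, the Ces\`aro average picks up, besides $\frac1N\sum_{n\leq N}\mu(n)^2$, the error term
\begin{equation*}
\frac{1}{\lambda(E)^2}\sum_{j\neq 0}|\widehat{\chi_E}(j)|^2\,\Bigl(\frac1N\sum_{n\leq N,\ n-j\geq 1}\mu(n)\mu(n-j)\Bigr),
\end{equation*}
a weighted sum of two-point M\"obius autocorrelations. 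Nothing in the paper (Lemma \ref{mu1}, i.e.\ Davenport--Hua, or Proposition \ref{ds1}) controls these; bounding them is a Chowla-type problem. So the ``one genuinely delicate point'' you identify is not a detail to be cleaned up later: it is an open gap, common to your proof and to the paper's, and the lemma as stated is only proved when the a.c.\ component has full support.
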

\begin{proof}
Suppose that the absolutely continuous part $U_a$ of $U$ acts on the subspace $\mathcal{H}_a$ of $\mathcal{H}$.

We can write $\mathcal{H}_a=\int_\mathbb{T}^\oplus \mathcal{H}_zd\lambda(z)$ and
$U_a=\int_\mathbb{T}^\oplus zd\lambda(z)$ where $\lambda$ is the Lebesgue measure on $\mathbb{T}$.

Let $\xi=\int_\mathbb{T}^\oplus \xi_z d\lambda(z)$ be a unit vector in $\mathcal{H}_a$. We then define $\xi_k=\int_\mathbb{T}^\oplus z^k\xi_zd\lambda(z)$ for $k\in\mathbb{Z}$. By direct computation, $\{\xi_k\}_{k\in\mathbb{Z}}$ is orthogonal family of unit vectors in $\mathcal{H}_a$.

Let $P_k$ be the projection of $\mathcal{H}$ onto $\mathbb{C}\xi_k$ and $T=\sum_{k=1}^\infty \mu(k)P_{-k}$. Then $T$ is bounded.

One can checks that $U_a^nP_k{U_a^*}^n=P_{k+n}$ for any $k,n$ in $\mathbb{Z}$. Then we see that
$$\sum_{n\leq N}\mu(n)\langle U_a^nT{U_a^*}^n\xi,\xi\rangle=\sum_{n\leq N}|\mu(n)|.$$
It is known that $\frac{1}{N}\sum_{n\leq N}|\mu(n)|$ does not converge to zero. Hence $\mu$ is not (weakly) linearly disjoint from $(\mathcal{B}(\mathcal{H}),Ad(U))$.

For CAR algebra $CAR(\mathcal{H})$, we consider the quasi-free state $\varphi_{(T+I)/2}$. Then
\begin{equation*}
\begin{split}
\varphi_{(T+I)/2}(\alpha_U^n(a(\xi)^*a(\xi)))&=\varphi_{(T+I)/2}(a(U^n\xi)^*a(U^n\xi))\\
&=\langle \frac{T+I}{2}U^n\xi, U^n\xi \rangle\\
&=\frac{1}{2}(\mu(n)+1),
\end{split}
\end{equation*}
and
$$\sum_{n\leq N}\mu(n)\varphi_{(T+I)/2}(\alpha_U^n(a(\xi)^*a(\xi)))=\frac{1}{2}\sum_{n\leq N}(|\mu(n)|+\mu(n)).$$
Hence $\mu$ is not linearly disjoint from $(CAR(\mathcal{H}),\alpha_U)$.
\end{proof}

By Lemma \ref{C1}, we will study unitary operator $U$ on a Hilbert space $\mathcal{H}$ whose spectrum measure is singular. The techniques here benefits a lot from Lemma 5.1 in \cite{SV}.

Now we need to find the relation between that M\"{o}bius disjointness for $\mathcal{B}(\mathcal{H})$ with inner automorphism and that for $CAR(\mathcal{H})$ with Bogoliubov automorphism.

\begin{proposition}\label{CAR1}
Let $U$ be a unitary operator on a Hilbert space $\mathcal{H}$ and $\widetilde{U}=U\oplus I$ on Hilbert space $\mathcal{H}\oplus \mathbb{C}\xi_0(=\widetilde{\mathcal{H}})$ for some unit vector $\xi_0$.  Then $\mu$ is linearly disjoint from $(CAR(\mathcal{H}),\alpha_U)$ if $\mu$ is weakly disjoint from $(\mathcal{B}(\widetilde{\mathcal{H}}^{\otimes m}), Ad(\widetilde{U}^{*\otimes m})$ for any $m\geq 1$. In particular, when $U$ has pure point spectrum $1$, then $\mu$ is linearly disjoint from $(CAR(\mathcal{H}),\alpha_U)$ if $\mu$ is weakly disjoint from $(\mathcal{B}(\mathcal{H}^{\otimes m}), Ad(U^{*\otimes m})$ for any $m\geq 1$.
\end{proposition}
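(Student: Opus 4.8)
The plan is to derive the statement from the weak-disjointness hypothesis by first enlarging the Hilbert space and then reducing to monomials. Since $\widetilde U = U\oplus I$ preserves $\mathcal H = \mathcal H\oplus 0$ and $\alpha_{\widetilde U}|_{CAR(\mathcal H)} = \alpha_U$, the pair $(CAR(\mathcal H),\alpha_U)$ is an $\alpha_{\widetilde U}$-invariant subsystem of $(CAR(\widetilde{\mathcal H}),\alpha_{\widetilde U})$; by Lemma \ref{sub} it therefore suffices to prove that $\mu$ is linearly disjoint from $(CAR(\widetilde{\mathcal H}),\alpha_{\widetilde U})$. The gain from this reduction is that $\widetilde U$ now fixes the unit vector $\xi_0$. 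Next, the linear span of the monomials $a(g_1)^*\cdots a(g_p)^* a(f_1)\cdots a(f_q)$ with $f_i,g_j\in\widetilde{\mathcal H}$ is a dense $*$-subalgebra of $CAR(\widetilde{\mathcal H})$, so by Lemma \ref{dense} I only have to control $\frac1N\sum_{n\le N}\mu(n)\rho(\alpha_{\widetilde U}^n(A))$ for such an $A$ and an arbitrary state $\rho$, using $\alpha_{\widetilde U}^n(a(h)) = a(\widetilde U^n h)$.

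The crucial step converts the general state $\rho$ into a normal functional on a tensor power. Fix $A = a(g_1)^*\cdots a(g_p)^* a(f_1)\cdots a(f_q)$. Because each $a(\cdot)$ is linear and each $a(\cdot)^*$ conjugate-linear, and $\|a(h)\|=\|a(h)^*\|=\|h\|$, the assignment $(f_1,\dots,f_q;g_1,\dots,g_p)\mapsto \rho(a(g_1)^*\cdots a(g_p)^* a(f_1)\cdots a(f_q))$ is a bounded form, multilinear in the $f$'s and conjugate-multilinear in the $g$'s, of norm $\le 1$; hence it is represented by a bounded operator $S:\widetilde{\mathcal H}^{\otimes q}\to\widetilde{\mathcal H}^{\otimes p}$ with $\|S\|\le 1$ and $\langle S(f_1\otimes\cdots\otimes f_q),\, g_1\otimes\cdots\otimes g_p\rangle$ equal to that expectation. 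Writing $F = f_1\otimes\cdots\otimes f_q$, $G = g_1\otimes\cdots\otimes g_p$, I obtain $\rho(\alpha_{\widetilde U}^n(A)) = \langle S\,(\widetilde U^n)^{\otimes q}F,\ (\widetilde U^n)^{\otimes p}G\rangle = \langle (\widetilde U^{*n})^{\otimes p}S\,(\widetilde U^n)^{\otimes q}F,\ G\rangle$, with $S$ independent of $n$. To fit this inside a single $Ad(\widetilde U^{*\otimes m})$ I set $m = \max(p,q)$ and pad with the fixed vector $\xi_0$: put $\hat F = F\otimes\xi_0^{\otimes(m-q)}$, $\hat G = G\otimes\xi_0^{\otimes(m-p)}$, and define $\hat S\in\mathcal B(\widetilde{\mathcal H}^{\otimes m})$ by $\hat S(X\otimes Z) = SX\otimes\langle Z,\xi_0^{\otimes(m-q)}\rangle\,\xi_0^{\otimes(m-p)}$, which has $\|\hat S\|\le 1$. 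Since $\widetilde U\xi_0 = \xi_0$ gives $(\widetilde U^n)^{\otimes m}\hat F = ((\widetilde U^n)^{\otimes q}F)\otimes\xi_0^{\otimes(m-q)}$ and likewise for $\hat G$, a direct check yields $\rho(\alpha_{\widetilde U}^n(A)) = \langle Ad(\widetilde U^{*\otimes m})^n(\hat S)\,\hat F,\ \hat G\rangle$.

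Finally, $X\mapsto\langle X\hat F,\hat G\rangle$ is a normal functional on $\mathcal B(\widetilde{\mathcal H}^{\otimes m})$; by polarization it is a finite combination of normal states, so the weak-disjointness hypothesis for $(\mathcal B(\widetilde{\mathcal H}^{\otimes m}),Ad(\widetilde U^{*\otimes m}))$ gives $\frac1N\sum_{n\le N}\mu(n)\rho(\alpha_{\widetilde U}^n(A))\to 0$. Lemma \ref{dense} then yields linear disjointness of $\mu$ from $(CAR(\widetilde{\mathcal H}),\alpha_{\widetilde U})$, hence from $(CAR(\mathcal H),\alpha_U)$. For the final assertion, if $1$ already lies in the point spectrum of $U$ one takes $\xi_0\in\mathcal H$ with $U\xi_0=\xi_0$ and runs the entire argument inside $\mathcal H$, so that only $(\mathcal B(\mathcal H^{\otimes m}),Ad(U^{*\otimes m}))$ is needed. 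I expect the main obstacle to be precisely the passage from the arbitrary state $\rho$ to the bounded operator $S$ together with the verification that the resulting functional is normal, since this is exactly what makes the hypothesis---stated only for normal states---applicable; the $\xi_0$-padding that reconciles unequal creation/annihilation counts $p\ne q$ with a single balanced $Ad(\widetilde U^{*\otimes m})$ is the other point demanding care.
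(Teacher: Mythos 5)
Your overall strategy mirrors the paper's: reduce to Wick-ordered monomials via Lemma \ref{dense}, encode the action of the state on a monomial as a matrix element of a fixed operator on a tensor power, pad with the fixed vector $\xi_0$ so that a single $Ad(\widetilde{U}^{*\otimes m})$ appears, and then apply the weak-disjointness hypothesis to vector (hence normal) functionals. Two structural differences are worth noting. First, your preliminary passage to $(CAR(\widetilde{\mathcal{H}}),\alpha_{\widetilde{U}})$ via Lemma \ref{sub} is a clean simplification the paper does not make; the paper stays inside $CAR(\mathcal{H})$ and inserts the projections $P_m$ and the padding map $V$ by hand. Second, you treat all monomials uniformly (allowing $p=0$ or $q=0$), whereas the paper handles purely creation monomials $a(\xi_1)\cdots a(\xi_m)$ and their adjoints separately, proving norm convergence $\|\frac1N\sum_{n\le N}\mu(n)\alpha_U^n(A)\|\to 0$ directly from the spectral theorem and Lemma \ref{mu1}, with no hypothesis needed; the operator trick is reserved for the mixed monomials. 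Your padding bookkeeping, the identity $\rho(\alpha_{\widetilde{U}}^n(A))=\langle Ad(\widetilde{U}^{*\otimes m})^n(\hat S)\hat F,\hat G\rangle$, and the normality of $X\mapsto\langle X\hat F,\hat G\rangle$ are all correct.

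The genuine gap is the step producing $S$. You argue: the form $(f_1,\dots,f_q;g_1,\dots,g_p)\mapsto\rho(a(g_1)^*\cdots a(g_p)^*a(f_1)\cdots a(f_q))$ is multilinear and bounded by $\prod_i\|f_i\|\prod_j\|g_j\|$, \emph{hence} it is represented by a bounded operator $S:\widetilde{\mathcal{H}}^{\otimes q}\to\widetilde{\mathcal{H}}^{\otimes p}$ with $\|S\|\le 1$. That implication is false for Hilbert space tensor products as soon as $q\ge 2$ or $p\ge 2$: boundedness of a multilinear form on elementary tensors does not control its linearization on $\widetilde{\mathcal{H}}^{\otimes q}$. (Classic example: $(f,g)\mapsto\sum_n f_n g_n$ on $\ell^2\times\ell^2$ is bounded by $\|f\|\,\|g\|$ on elementary tensors, yet its linearization sends $\sum_{n\le N}e_n\otimes e_n$, of norm $\sqrt N$, to $N$.) Moreover, in the CAR setting the failure is real, not just a defect of the argument: the linearized form is built from $\zeta\mapsto a(P\zeta)$, creation by an antisymmetric tensor, and this map is genuinely unbounded from $\widetilde{\mathcal{H}}^{\otimes 2}$ into $CAR(\widetilde{\mathcal{H}})$. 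For $\zeta_n=n^{-1/2}\sum_{i\le n}e_{2i-1}\wedge e_{2i}$ one has $\|\zeta_n\|=1$ while $a(\zeta_n)$ acts as a rescaled big-spin raising operator $n^{-1/2}\sum_{i\le n}\sigma_i^+$ and so has norm of order $\sqrt n$; from this one can build a single state $\rho$ (a convex series of vector states supported on disjoint blocks) for which your $S$ fails to exist as a bounded operator already when $p=0$, $q=2$. To be fair, this is exactly the point at which the paper's own proof is thinnest: it defines the corresponding sesquilinear form $B$ directly on tensors and asserts ``Moreover $B(\cdot,\cdot)$ is bounded'' without justification. The difference is that the paper's formulation (bounded sesquilinear form on the tensor product $\Rightarrow$ bounded operator) is at least a valid inference whose hypothesis is left unproven, whereas your route derives the hypothesis by an inference that is false in general. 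Any complete proof must either restrict the class of states or exploit more of the CAR structure than the bare estimate $\|a(h)\|=\|h\|$; as it stands, this step is a genuine gap in your proposal.
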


\begin{proof}
Suppose that $\mu$ is weakly linearly disjoint from $(\mathcal{B}(\widetilde{\mathcal{H}}^{\otimes m}),Ad(\widetilde{U}^{*\otimes m})$.

Since the linearly span of
$$a(\xi_{1})\cdots a(\xi_{m}), m\geq 1, a(\eta_{k})^*\cdots a(\eta_{1})^*, k\geq 1$$
and $$a(\eta_{k})^*\cdots a(\eta_{1})^*a(\xi_{1})\cdots a(\xi_{m}), k,m\geq 1$$ is norm dense in $CAR(\mathcal{H})$, we will consider element $A$ in $CAR(\mathcal{H})$ in the three forms as above.

When $A=a(\xi_{1})\cdots a(\xi_{m})$, we have
\begin{equation*}
\begin{split}
\|\frac{1}{N}\sum_{n\leq N}\mu(n)\alpha_U^n(A)\|
&=\|\frac{1}{N}\sum_{n\leq N}\mu(n) a(U^n\xi_{1})\cdots a(U^n\xi_{m}))\|\\
&=\|\frac{1}{N}\sum_{n\leq N}\mu(n) a(U^n\xi_{1}\wedge \cdots \wedge U^n\xi_{m})\|\\
&=\|\frac{1}{N}\sum_{n\leq N}\mu(n)(U^n\otimes\cdots\otimes U^n)(\xi_{1}\wedge \cdots \wedge\xi_{m})\|\\
&\leq \|\frac{1}{N}\sum_{n\leq N}\mu(n)(U^n\otimes\cdots\otimes U^n)\|\|\xi_{1}\wedge \cdots \wedge\xi_{m}\|\\
&\leq \frac{1}{N}\max_{z\in sp(U\otimes\cdots\otimes U)}|\sum_{n\leq N}\mu(n)z^n|\|\xi_{1}\wedge \cdots \wedge\xi_{m}\|\to 0.
\end{split}
\end{equation*}

When $A=a(\eta_{k})^*\cdots a(\eta_{1})^*$, we consider its adjoint $A^*$ and obtain that $\|\frac{1}{N}\sum_{n\leq N}\mu(n)\alpha_U^n(A)\|\to 0$ as $N\to\infty$.

When $A=a(\eta_{k})^*\cdots a(\eta_{1})^*a(\xi_{1})\cdots a(\xi_{m})$, we assume that $k\leq m$. If $k>m$, we will consider $A^*$ instead of $A$. Let $\rho$ be a state on $CAR(\mathcal{H})$. Then
\begin{equation*}
\begin{split}
\rho(\alpha_U^n(A))&=\rho(\alpha_U^n(a(\eta_{k})^*\cdots a(\eta_{1})^*a(\xi_{1})\cdots a(\xi_{j_m})))\\
&=\rho(a(U^n\eta_{k})^*\cdots a(U^n\eta_{1})^*a(U^n\xi_{1})\cdots a(U^n\xi_{m}))\\
&=\rho(a(U^n\eta_{1}\wedge \cdots \wedge U^n\eta_{k})^*a(U^n\xi_{1}\wedge \cdots \wedge U^n\xi_{m}))).
\end{split}
\end{equation*}

Let $P_n$ be the projection of $\otimes^n\widetilde{\mathcal{H}}$ onto $\otimes^n\mathcal{H}$. Let $V$ be transformation from $\otimes^k\widetilde{\mathcal{H}}$ into $\otimes^m\widetilde{\mathcal{H}}$ given by $V\xi=\xi\otimes \xi_0\otimes \cdots \otimes \xi_0$ for any $\xi$ in $\otimes^k\widetilde{\mathcal{H}}$.

For any $\xi$ in $\otimes^m\widetilde{\mathcal{H}}$ and $\eta$ in $\otimes^m\widetilde{\mathcal{H}}$, let $B(\eta,\xi)=\rho(a(PP_kV^*\eta)^*a(PP_m\xi))$. Then $B(\cdot,\cdot)$ is linearly in the first variable and conjugate-linearly in the second variable. Moreover $B(\cdot,\cdot)$ is bounded. Hence there exists a bounded element $T$ in $\mathcal{B}(\otimes^m\widetilde{\mathcal{H}})$ such that $\rho(a(PP_kV^*\eta)^*a(PP_m\xi))=\langle T\xi,\eta \rangle$ for any $\xi,\eta$ in $\otimes^m\widetilde{\mathcal{H}}$.

Let $\eta=\eta_{1}\wedge\cdots \wedge \eta_{k}\otimes \xi_0\otimes\cdots\otimes \xi_0$ and $\xi=\xi_{1}\wedge\cdots \wedge \xi_{m}$. Then $PP_m\widetilde{U}\otimes\cdots \otimes\widetilde{U}\xi=\widetilde{U}\otimes\cdots \otimes\widetilde{U}\xi$ and $PP_kV^*\widetilde{U}\otimes\cdots \otimes\widetilde{U}\eta=PP_kU^n\eta_{1}\wedge\cdots \wedge U^n\eta_{k}=U^n\eta_{1}\wedge\cdots \wedge U^n\eta_{k}$. On the other hand,
$$
\rho(\alpha_U^n(A))=\langle \widetilde{U}^{*n}\otimes\cdots\otimes \widetilde{U}^{*n}T\widetilde{U}^n\otimes\cdots \otimes\widetilde{U}^n\xi,\eta \rangle
$$
Hence by the assumption, we have that $\frac{1}{N}\sum_{n\leq N}\mu(n)\rho(\alpha_U^n(A)) \to 0$ for any element $A$ in the linearly span of  $a(\xi_{1})\cdots a(\xi_{m}), m\geq 1$, $a(\eta_{k})^*\cdots a(\eta_{1})^*, k\geq 1$ and $a(\eta_{k})^*\cdots a(\eta_{i_1})^*a(\xi_{1})\cdots a(\xi_{m}), k,m\geq 1$.

Therefore $\mu$ is linearly disjoint from $(CAR(\mathcal{H}),\alpha_U)$.

If $U$ has pure point spectrum $1$, we let $\xi_0'$ be its unit eigenvector. Then we define $V'$ to be transformation from $\otimes^k\mathcal{H}$ into $\otimes^m\mathcal{H}$ by $V'\xi=\xi\otimes\xi_0'\otimes\cdots\otimes\xi_0'$. Similarly, we have
$$
\rho(\alpha_U^n(A))=\langle {U}^{*n}\otimes\cdots\otimes {U}^{*n}T {U}^n\otimes\cdots \otimes {U}^n\xi,\eta \rangle.
$$
Then the conclusion follows by a similar argument above.
\end{proof}

\begin{remark}
It is clear that if $\mu$ is linearly disjoint from $(CAR(\mathcal{H}),\alpha_U)$, then $\mu$ is weakly linearly disjoint from $(\mathcal{B}(\mathcal{H}),Ad(U))$. \end{remark}

The CNT entropy of Bogoliubov automorphism was first computed by E. St{\o}rmer and  D. Voiculescu \cite{SV}. Later in \cite{Nesh01}. In \cite{SN}, for the Bogoliubov automorphism $\alpha_U$ of CAR algebra $CAR(\mathcal{H})$, the topology entropy $ht(\alpha_U)=\log 2\int_{\mathbb{T}}m_U(z)d\mu(z)$. In particular, $ht(\alpha_U)=0$ if $U$ has singular spectral measure.

\begin{proposition}\label{CAR2}
The M\"{o}bius function $\mu$ is linearly disjoint from $(CAR(\mathcal{H}),\alpha_U)$ when the spectral measure of $U$ is pure point measure.
\end{proposition}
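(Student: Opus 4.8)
The plan is to apply Proposition \ref{CAR1} and thereby reduce the statement to a spectral estimate on full operator algebras. Writing $\widetilde{U}=U\oplus I$ on $\widetilde{\mathcal{H}}=\mathcal{H}\oplus\mathbb{C}\xi_0$ as in Proposition \ref{CAR1}, it suffices to prove that $\mu$ is weakly linearly disjoint from $(\mathcal{B}(\widetilde{\mathcal{H}}^{\otimes m}),Ad(\widetilde{U}^{*\otimes m}))$ for every $m\geq 1$. Since $U$ has pure point spectrum, $\widetilde{\mathcal{H}}$ admits an orthonormal basis of eigenvectors of $\widetilde{U}$ (the eigenbasis of $U$ together with $\xi_0$, whose eigenvalue is $1$), and hence $\widetilde{\mathcal{H}}^{\otimes m}$ admits an orthonormal basis of eigenvectors of $\widetilde{U}^{\otimes m}$ given by elementary tensors. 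Thus $W:=\widetilde{U}^{*\otimes m}$ is again a unitary with pure point spectrum, and the whole problem reduces to the following general assertion: if $W$ is a unitary operator with pure point spectrum on a Hilbert space $\mathcal{K}$, then $\mu$ is weakly linearly disjoint from $(\mathcal{B}(\mathcal{K}),Ad(W))$.

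To prove this assertion I would first reduce an arbitrary normal state to a vector state. A normal state on $\mathcal{B}(\mathcal{K})$ is of the form $\rho(\cdot)=\mathrm{Tr}(D\,\cdot)$ for a positive trace-class operator $D$ with $\mathrm{Tr}(D)=1$; taking the spectral decomposition $D=\sum_j\lambda_j P_{\zeta_j}$ and truncating to $D_m=\sum_{j\leq m}\lambda_j P_{\zeta_j}$ gives $\|\rho-\rho_m\|=\|D-D_m\|_1\to 0$. By Lemma \ref{state} it is enough to treat each $\rho_m$, and since $\rho_m$ is a finite nonnegative combination of vector functionals and the quantity in question is linear in the functional, it suffices to handle a single vector state $\rho(\cdot)=\langle\,\cdot\,\eta,\eta\rangle$ with $\|\eta\|=1$.

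Fix $T$ in the unit ball of $\mathcal{B}(\mathcal{K})$ and set $g(n)=\langle W^nTW^{*n}\eta,\eta\rangle=\langle TW^{*n}\eta,W^{*n}\eta\rangle$. The key point is that pure point spectrum makes $g$ a uniformly almost periodic sequence. Write $W=\sum_k e(\theta_k)Q_k$ with $Q_k$ the spectral projections onto the eigenspaces and put $\eta_k=Q_k\eta$, so that $\sum_k\|\eta_k\|^2=1$. Given $\epsilon>0$, choose a finite index set $F$ with $\sum_{k\notin F}\|\eta_k\|^2<\epsilon^2$ and let $\eta^F=\sum_{k\in F}\eta_k$; since $W$ is unitary, $\|W^{*n}\eta-W^{*n}\eta^F\|=\|\eta-\eta^F\|<\epsilon$ for all $n$. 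Putting $g_F(n)=\langle TW^{*n}\eta^F,W^{*n}\eta^F\rangle=\sum_{k,l\in F}e(n(\theta_l-\theta_k))\langle T\eta_k,\eta_l\rangle$, a two-term estimate gives $|g(n)-g_F(n)|\leq 2\|T\|\,\epsilon$ uniformly in $n$. Now $g_F$ is a genuine trigonometric polynomial, so by Lemma \ref{mu1} each of its finitely many frequencies contributes $O((\log N)^{-h})$ and $\frac1N\sum_{n\leq N}\mu(n)g_F(n)\to 0$; combined with $\frac1N\sum_{n\leq N}|\mu(n)|\leq 1$ this yields
$$\limsup_{N\to\infty}\left|\frac1N\sum_{n\leq N}\mu(n)g(n)\right|\leq 2\|T\|\,\epsilon .$$
Letting $\epsilon\to 0$ finishes the argument.

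The step I expect to be the main obstacle is precisely the interchange of the limit in $N$ with the summation over the spectrum: the double series $\sum_{k,l}\langle T\eta_k,\eta_l\rangle$ need not converge absolutely (only $\sum_k\|\eta_k\|^2<\infty$ is available), so one cannot naively push $\tfrac1N\sum_{n\leq N}\mu(n)e(n(\theta_l-\theta_k))\to 0$ through the sum. The truncate-first-then-let-$N\to\infty$ device, i.e.\ establishing uniform almost periodicity of $g$, is what circumvents this, and it is exactly the place where the pure point hypothesis is indispensable — by Lemma \ref{C1} the conclusion genuinely fails once $W$ has any absolutely continuous spectrum.
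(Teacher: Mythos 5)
Your proof is correct and follows essentially the same route as the paper's: reduce via Proposition \ref{CAR1} to weak disjointness of $\mu$ from $(\mathcal{B}(\widetilde{\mathcal{H}}^{\otimes m}),Ad(\widetilde{U}^{*\otimes m}))$, exploit pure point spectrum to turn everything into trigonometric sums controlled by Lemma \ref{mu1}, and use Lemma \ref{state} to pass from finite approximations to general normal states. Your write-up is in fact more careful than the paper's two-line argument, since you explicitly handle the off-diagonal coefficients $\langle T\eta_k,\eta_l\rangle$ between distinct eigenvectors and the preservation of pure point spectrum under $U\mapsto\widetilde{U}^{*\otimes m}$, whereas the paper only displays the estimate for a vector state at a single eigenvector and declares the rest easy.
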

\begin{proof}
Since the spectral measure of $U$ is pure point measure, by Lemma \ref{state}, we estimate 
$$\frac{1}{N}\sum_{n=1}^N\mu(n)\langle U^nT{U^*}^n\xi,\xi\rangle$$
for $(\mathcal{B}(\mathcal{H}),Ad(U))$, where $\xi$ is a unit eigenvector corresponding to some eigenvalue. It is easy to see that the sum goes to zero as $N$ goes to infinity. By Proposition \ref{CAR1}, we see that $\mu$ is linear disjoint from $(CAR(\mathcal{H}),\alpha_U)$.
\end{proof}

\begin{question}
Is $\mu$ linearly disjoint from $(CAR(\mathcal{H}),\alpha_U)$ (or is $\mu$ weakly linearly disjoint from $(\mathcal{B}(\mathcal{H}),Ad(U))$), when the spectral measure of $U$ is singular continuous?
\end{question}

\begin{remark}
If the question above has an affirmative answer (which we believe), then $\mu$ is linear disjoint from $(CAR(\mathcal{H}),\alpha_U)$ if and only if $\mu$ is weakly linearly disjoint from $(\mathcal{B}(\mathcal{H}),Ad(U))$.
\end{remark}

\section{Free Group C$^*$ Algebras}

Let $\mathcal{F}_\mathbb{Z}$ be the free group on generators $\ldots,g_{-1},g_0,g_1,\ldots$ and $C_r^*(F_\mathbb{Z})$ the reduced free group C$^*$ algebra. Let $\alpha$ be the automorphism of $C_r^*(F_\mathbb{Z})$ such that $\alpha(g_i)=g_{i+1}$.

\begin{proposition}
The M\"{o}bious function $\mu$ is linearly disjoint from $(C_r^*(F_\mathbb{Z}),\alpha)$ whose Voiculescu-Brown entropy $ht(\alpha)=0$.
\end{proposition}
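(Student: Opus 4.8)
The plan is to prove the stronger statement that the C$^*$-norm of the M\"{o}bius average already tends to zero; this then yields linear disjointness for \emph{every} state simultaneously and sidesteps the difficulty that a general state on $C_r^*(F_\mathbb{Z})$ need be neither tracial nor normal. Since the group elements $\{\lambda(g):g\in F_\mathbb{Z}\}$ span a norm-dense $*$-subalgebra, Lemma \ref{dense} reduces the problem to showing, for each fixed $g\in F_\mathbb{Z}$,
\[
\Big\|\frac{1}{N}\sum_{n\le N}\mu(n)\,\alpha^n(\lambda(g))\Big\|\longrightarrow 0 .
\]
Here $\alpha^n(\lambda(g))=\lambda(\sigma^n g)$, where $\sigma$ is the index shift $g_i\mapsto g_{i+1}$. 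The decisive structural observation is that $\sigma$ preserves reduced word length: if $g$ is a reduced word of length $k$, then every $\sigma^n g$ is again a reduced word of length exactly $k$, and for distinct $n$ the words $\sigma^n g$ are pairwise distinct, their index supports being translates of a fixed finite set.

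The main tool I would use is Haagerup's inequality (the rapid decay property of free groups): if $f$ is supported on the words of a single fixed length $k$, then $\|\lambda(f)\|\le (k+1)\,\|f\|_2$, the constant being independent of the number of free generators. Applying this to $x_N=\frac{1}{N}\sum_{n\le N}\mu(n)\lambda(\sigma^n g)$, which is supported on distinct length-$k$ words, gives
\[
\|x_N\|\le (k+1)\,\frac{1}{N}\Big(\sum_{n\le N}|\mu(n)|^2\Big)^{1/2}\le \frac{k+1}{\sqrt N}\longrightarrow 0,
\]
since $\sum_{n\le N}|\mu(n)|^2\le N$. For the remaining case $g=e$ one has $\alpha^n(\lambda(e))=I$, and $\frac1N\sum_{n\le N}\mu(n)\to 0$ is the classical Mertens/prime-number-theorem estimate (or Lemma \ref{mu1}). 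As $|\rho(x_N)|\le\|x_N\|$ for any state $\rho$, Lemma \ref{dense} then delivers linear disjointness of $\mu$ from $(C_r^*(F_\mathbb{Z}),\alpha)$.

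It remains to record that $ht(\alpha)=0$, so that this example genuinely falls under Conjecture \ref{Con1}. The algebra $C_r^*(F_\mathbb{Z})$ is exact, so the Voiculescu-Brown entropy is defined. One takes $\Omega$ to consist of the real and imaginary parts of finitely many generators; then $\Omega\cup\alpha\Omega\cup\cdots\cup\alpha^{n-1}\Omega$ lies in the reduced free subalgebra generated by the generators indexed in a window of length $O(n)$, and the completely positive approximation rank $rcp$ of this set grows sub-exponentially in $n$, whence $ht(\alpha,\Omega)=0$ for every $\Omega$ and so $ht(\alpha)=0$. This is the known value of the Voiculescu-Brown entropy of the free shift, and I would cite that computation rather than reproduce it.

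The genuinely delicate point, and the main obstacle, is precisely the treatment of arbitrary states: unlike the finite von Neumann case (Propositions \ref{H1} and \ref{H2}), where one works with normal states and the $L^2$-trace vector, here no invariant or normal structure is available to exploit. The Haagerup/rapid-decay estimate is what resolves this, by promoting the vanishing of the Ces\`aro--M\"{o}bius average to a norm statement that is automatically uniform over all states; verifying that the inequality applies with a length-independent constant, so that it is insensitive to the growing window of active generators, is the crux of the argument.
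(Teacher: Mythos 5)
Your proof is correct, and it takes a genuinely different route from the paper's. The paper also proves the stronger norm statement (so that all states are handled at once), but by a different mechanism: for a fixed reduced word $g_{\hat m}$ whose generator indices lie in $[-l,l]$, it splits the M\"{o}bius sum into the $2l+1$ arithmetic progressions $n\equiv k \pmod{2l+1}$; along each progression the shifted words $\alpha^n(g_{\hat m})$ involve pairwise disjoint sets of generators and are therefore \emph{free}, and the paper then invokes the free central limit theorem of \cite{VDN} to conclude that each block $B_{k,N}$ has norm $o(N)$. Your argument replaces both steps (the progression decomposition and the appeal to free probability) with a single application of Haagerup's inequality: the full averaged sum is supported on pairwise distinct words of the fixed length $k$, so $\|x_N\|\le (k+1)N^{-1/2}$. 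This is cleaner, gives an explicit rate, and also sidesteps a delicate point in the paper's write-up: convergence in distribution to a semicircular element does not by itself control operator norms, so the paper's step from the free CLT to $\|(2l+1)B_{k,N}/N\|\to 0$ really needs a norm bound for sums of free elements (e.g. $\|\sum_p c_p u_p\|\le 2(\sum_p |c_p|^2)^{1/2}$ for free Haar unitaries), which is itself a close relative of the Haagerup/Leinert inequality you use. Conversely, the paper's decomposition needs nothing about length functions or rapid decay, only freeness. Two further remarks: you rightly treat $g=e$ separately --- this is the only place where cancellation of $\mu$ (Lemma \ref{mu1} with $\theta=0$, i.e. the prime number theorem) is actually used, and the paper leaves that case implicit; and for the entropy claim you, like the paper (which cites \cite{Choda06}), defer to the known computation, which is appropriate --- though note that Haagerup's inequality is not among the paper's references, so a citation for it would have to be added.
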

\begin{proof}
Let $\hat{m}=(i_1,\ldots,i_k)$ when $k\geq 1$ and $\hat{m}=\emptyset$ when $k=0$, where $i_1,\ldots,i_k\in\mathbb{Z}$. Denote by $g_{\hat{m}}=g_{i_1}\cdots g_{i_k}$, $g_{\emptyset}=e$ and $|\hat{m}|$ the length of $\hat{m}$.

For any given $g_{\hat{m}}$, we let $l=\max\{|i_1|,\ldots,|i_k|\}$.

Let
$$B_{k,N}=\sum_{p=0}^{[N/(2l+1)]-1}\mu(p(2l+1)+k)\alpha^{p(2l+1)+k}(g_{\hat{m}})$$ for $k=1,\ldots,2l+1$.

It is clear that $B_{k,N}$ is sum of free elements, i.e. $B_{k,N}=D_{1,k,N}+\cdots+D_{q,k,N}$, where $q\leq [N/(2l+1)]$ and $D_{1,k,N},\ldots,D_{q,k,N}$ are free. We see that $\frac{1}{2}(D_{1,k,N}+D_{1,k,N}^*),\ldots,\frac{1}{2}(D_{q,k,N}+D_{q,k,N}^*)$ are free. By the central limit theorem in \cite{VDN}, we obtain that $\frac{1}{\sqrt{q}}(\frac{1}{2}(D_{1,k,N}+D_{1,k,N}^*)+\cdots+\frac{1}{2}(D_{1,k,N}+D_{1,k,N}^*))$ converges in distribution to a semicircle element. Then $\|(2l+1)B_{k,N}/N\|\to 0$ as $N\to \infty$ and hence there exists $N_0$ in $\mathbb{N}$ such that $\|(2l+1)B_{k,N}/N\|<\epsilon$ for $k=1,\ldots, 2l+1$ and $N>N_0$. Therefore
\begin{equation*}
|\frac{1}{N}\sum_{n\leq N}\mu(n)\rho(\alpha^ng_{\hat{m}}))|=|\frac{1}{N}\sum_{k=1}^{2l+1}\rho(B_{k,N})|<\epsilon
\end{equation*}
This shows that $\mu$ is linearly disjoint from $(C_r^*(F_\mathbb{Z}),\alpha)$. By \cite{Choda06}, the Voiculescu-Brown entropy $ht(\alpha)=0$.
\end{proof}



$$$$

\noindent Jinsong Wu\\
University of Science and Technology of China\\
Hefei, Anhui, China 230026\\
wjsl@ustc.edu.cn\\
\\
Wei Yuan\\
Academy of Mathematics and Systems Science, CAS\\
Beijing, China 100190\\
wyuan@math.ac.cn\\

\end{document}